\newtheorem{prop}{Theorem}[section]
\newtheorem{lemma}[prop]{Lemma}
\newtheorem{definition}[prop]{Definition}
\newtheorem{remark}[prop]{Remark}
\newcommand{\clc}{\cdot\ldots\cdot}
\newcommand{\olo}{\otimes\ldots\otimes}
\newcommand{\plp}{+ \ldots +}
\newcommand{\nn}{\mathbb{N}}
\newcommand{\zz}{\mathbb{Z}}
\newcommand{\Qq}{\mathbb{Q}}
\newcommand{\cc}{\mathbb{C}}
\newcommand{\C}[1]{\mathcal{#1}}
\newcommand{\T}[1]{\textrm{#1}}
\newcommand{\E}[1]{\emph{#1}}
\newcommand{\F}[1]{\mathbf{#1}}
\newcommand{\fork}[2]{\left\{ \begin{array}{#1} #2 \end{array} \right.} 
\newcommand{\arr}[2]{\begin{array}{#1} #2 \end{array}}
\newcommand{\mat}[2]{\left(\begin{array}{#1} #2 \end{array} \right)}
\newcommand{\q}{\qquad}
\newcommand{\qq}{\qquad \qquad}
\newcommand{\qqqq}{\qquad \qquad \qquad \qquad}
\newcommand{\grad}{\hat\otimes}
\newcommand{\wit}[1]{\widetilde{#1}}
\newcommand{\wih}[1]{\widehat{#1}}
\numberwithin{equation}{section}
\begin{document}
\title{Invariance results for pairings with algebraic $K$-theory}

\author{J. Kaad}
\thanks{2010 \emph{Mathematical Subject Classification.} Primary: 19D55,
  19K33, 46L80} 
\maketitle
\vspace{-10pt}
\centerline{ Department of Mathematics, Australian National University}
\centerline{ John Dedman Building, Acton, ACT, $0200$ Australia}
\centerline{ email: jenskaad@hotmail.com}

\bigskip
\vspace{30pt} 

\centerline{\textbf{Abstract}} 
To each algebra over the complex numbers we associate a sequence of abelian
groups in a contravariant functorial way. In degree $(m-1)$ we have the
$m$-summable Fredholm modules over the algebra modulo stable $m$-summable
perturbations. These new finitely summable $K$-homology groups pair with
cyclic homology and with algebraic $K$-theory. In the case of cyclic homology
the pairing is induced by the Chern-Connes character. The pairing between
algebraic $K$-theory and finitely summable $K$-homology is induced by the
Connes-Karoubi multiplicative character.

\newpage
\tableofcontents
\newpage

\section{Introduction}
The aim of this paper is to study the invariance properties of the
Connes-Karoubi multiplicative pairing between finitely summable Fredholm
modules and algebraic $K$-theory,
\begin{equation}\label{eq:multpairin}
\arr{ccc}{
\C M : \T{Ell}^{m-1}(A) \times K_m(A) 
\to \cc/(2 \pi i)^{\lceil m/2 \rceil} \zz & \q & m \in \nn
}
\end{equation}
Here $\T{Ell}^{m-1}(A)$ denotes the abelian group of unitary equivalence
classes of $m$-summable Fredholm modules over a $\cc$-algebra $A$ together
with a natural choice of inverses. The abelian group $K_m(A)$ is the algebraic
$K$-theory of $A$. The multiplicative pairing was constructed by A. Connes and
M. Karoubi in \cite{conkar}. The associated invariants of algebraic $K$-theory
\[
\arr{ccc}{
\C M_{\C F} : K_m(A) \to \cc/(2 \pi i)^{\lceil m/2 \rceil} \zz & \q & \C F \in
\T{Ell}^{m-1}(A)
}
\]
can be interpretted as higher determinants induced by $m$-summable Fredholm
modules. Indeed, when $m = 1$, the multiplicative character is constructed
from the application
\[
\arr{ccc}{
(g,h) \mapsto \T{det}(gh^{-1}) & \q & g,h \in \C L(H)^*, \, \, g-h \in \C L^1(H)
}
\]
where $\T{det} : GL_1(\C L^1(H)) \to \cc^*$ denotes the Fredholm
determinant. See \cite[\S $5$]{conkar}. When $m=2$, the multiplicative
character can be described in terms of the determinant invariant of
L. Brown. This invariant is essentially given by the homomorphism
\[
\delta := (\T{det} \circ \partial) : K_2(\C L(H)/\C L^1(H)) \to K_1(\C
L^1(H),\C L(H)) \to \cc^*
\]
Here the boundary map comes from the short exact sequence of $\cc$-algebras
\[
\begin{CD}
0 @>>> \C L^1(H) @>>> \C L(H) @>>> \C L(H)/\C L^1(H) @>>> 0
\end{CD}
\]
and the homomorphism $\T{det} : K_1(\C L^1(H),\C L(H)) \to \cc^*$ is the
Fredholm determinant on relative algebraic $K$-theory. See \cite{brownII}. The
precise relation between the multiplicative character and the determinant
invariant can be found in \cite{kaadI}, see also \cite[\S $5$]{conkar}. The
determinant invariant has found an interesting application in the work of
R. Carey and J. Pincus on generalizations of the Szegö limit theorem to the
case of non-zero winding numbers, see \cite{cpI,cpII}. When $m \geq 3$ and $A$
is a commutative Banach algebra, the multiplicative character can be
calculated on higher Loday products of exponentials. The result is expressed
in terms of the index cocycle applied to permutations of logarithms. The
precise formula reads
\begin{equation}\label{eq:calcformula}
\C M_{\C F}([e^{a_0}] * \ldots * [e^{a_{m-1}}])
= \sum_{s \in \Sigma_{m-1}}(q \circ \tau_{\C F})
(c_{m-1} \, \T{sgn(s)}b_0 \otimes b_{s(1)} \olo b_{s(m-1)})
\end{equation}
Here $\tau_{\C F} : C^\lambda_{m-1}(A) \to \cc$ denotes the index cocycle
of the $m$-summable Fredholm module $\C F \in \T{Ell}^{m-1}(A)$, $q : \cc \to
\cc /(2 \pi i)^{\lceil m/2 \rceil} \zz$ is the quotient map and $c_{m-1} \in
\Qq$ is a rational constant. The notation $* : K_n(A) \times K_k(A) \to
K_{n+k}(A)$ refers to the interior Loday product. The elements
$b_0,\ldots,b_{m-1} \in A$ are any logarithms of the exponentials
$e^{a_0},\ldots,e^{a_{m-1}}$,
\[
\arr{ccc}{
e^{b_i} = e^{a_i} & \T{for all} & i \in \{0,\ldots,m-1\}
}
\]
See \cite{kaadII}.

Having these computational results for the multiplicative character in mind,
it seems desirable to answer the following question:

\emph{Which reasonable equivalence relations can be put on the abelian groups
  of finitely summable Fredholm modules without changing the multiplicative
  pairing \eqref{eq:multpairin}?}

As the formula \eqref{eq:calcformula} suggests, an important part of the
multiplicative character is the index cocycle $\tau_{\C F} :
C^\lambda_{m-1}(A) \to \cc$ associated with the $m$-summable Fredholm
module. In fact, it turns out that the study of the invariance properties of
the multiplicative character essentially amounts to a study of the invariance
properties of the Chern character
\[
\arr{ccc}{
\T{Ch} : \T{Ell}^{m-1}(A) \to HC^{m-1}(A) & \q & 
\T{Ch}(\C F) = [\tau_{\C F}]
}
\]
Here $HC^{m-1}(A)$ denotes the cyclic cohomology of the $\cc$-algebra $A$ and
$[\tau_{\C F}]$ denotes the class of the index cocycle. It should be noticed
that the target for the above Chern character is one of the non-periodic
versions of cyclic cohomology. If the target is replaced by periodic cyclic
cohomology then there are good invariance results available. Here periodic
cyclic cohomology, $HP^0(A)$ and $HP^1(A)$, is defined as the inductive limit
of cyclic cohomology under the periodicity operators $S : HC^*(A) \to
HC^{*+2}(A)$. For example, it has been proved by A. Connes that the
"topological" Chern character
\[
\arr{ccc}{
\underline{\T{Ch}} : \T{Ell}^{m-1}(A) \to HP^j(A) & \q & j \T{ parity as }m-1
}
\]
is invariant under appropriate differentiable fields of $m$-summable Fredholm
modules, see \cite[I $5$ Lemma $1$]{connes} for a precise statement of this
result. A study of these homotopy invariance properties in the bivariant case
has been carried out by V. Nistor, see \cite{nistorI}. In general, the Chern
character from smooth homotopy classes of Fredholm modules to periodic cyclic
cohomology is constructed to analyze data coming from topological $K$-theory,
or more specifically for index-theoretical considerations. When dealing with
the more subtle algebraic $K$-theory it is however crucial to avoid any
application of the periodicity operator. For example, when $A$ is commutative,
the cyclic cycle given by the sum over all permutations
\[
\arr{ccc}{
x = \sum_{s \in \Sigma_{m-1}}\T{sgn}(s) b_0 \otimes b_{s(1)} \olo b_{s(m-1)}
\in Z_{m-1}^\lambda(A) & \q & b_0,\ldots,b_{m-1} \in A
}
\]
lies in the kernel of the periodicity operator in cyclic homology. But this
specific element shows up in our explicit formula \eqref{eq:calcformula} for
the multiplicative character. In order to analyze the information produced by
secondary invariants of algebraic $K$-theory it is therefore necessary to
obtain invariance results for the more refined Chern character
\[
\T{Ch} : \T{Ell}^{m-1}(A) \to HC^{m-1}(A)
\]
and not only for its topological sibling. The similarity between finitely
summable Fredholm modules and generators of analytic $K$-homology suggests
that the appropriate equivalence relation on $\T{Ell}^{m-1}(A)$ has a
$K$-homological analogue. Furthermore, we should choose the equivalence
relation with the most algebraic flavour. A good candidate could thus be a
finitely summable version of stable compact perturbation. See \cite[Definition
$17.2.4$]{black}. The next Theorem, which we prove in Section
\ref{cobordpert}, indicates that this is an appropriate choice:

\begin{prop}\label{statemaini}
Let $\C F = (\pi,H,F)$ and $\C G = (\pi,H,G)$ be two $m$-summable Fredholm
modules over a $\cc$-algebra $A$. Assume that the difference $G-F \in \C
L^m(H)$ lies in the $m^{\E{th}}$ Schatten ideal. Then the Chern characters of
$\C F$ and $\C G$ coincide in cyclic cohomology,
\[
\E{Ch}(\C F) = \E{Ch}(\C G) \in HC^{m-1}(A)
\]
\end{prop}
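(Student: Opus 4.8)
The plan is to interpolate between $\C F$ and $\C G$ and to write the difference of their index cocycles as an explicit cyclic coboundary, i.e.\ to run a transgression (Chern--Simons) argument entirely inside $HC^{m-1}(A)$, never touching the periodicity operator $S$. Set $V := G - F$; by hypothesis $V \in \C L^m(H)$ and $V = V^*$, and the defining relations $F^2 = G^2 = 1$ force $FV + VF + V^2 = 0$. Consider the affine family $F_t := F + tV$, $t\in[0,1]$. Since $\C L^m(H)$ is a two-sided ideal one has $[F_t,\pi(a)] = [F,\pi(a)] + t[V,\pi(a)] \in \C L^m(H)$ for every $a \in A$, while $F_t^2 - 1 = t(1-t)(FV+VF) = -t(1-t)V^2 \in \C L^m(H)$ and $\dot F_t = V \in \C L^m(H)$. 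Hence every operator word appearing in the index cocycle of $(\pi,H,F_t)$, and in its $t$-derivative, is a product of at least $m$ factors drawn from $\C L^m(H)$, so it is trace class and the relevant functionals on $A$ are finite for all $t$.

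Write $\tau_t \in C^\lambda_{m-1}(A)$ for the index cocycle attached to $(\pi,H,F_t)$; at the endpoints $\tau_0 = \tau_{\C F}$ and $\tau_1 = \tau_{\C G}$, and for $F_t^2 = 1$ it is a constant multiple of $\operatorname{Tr}\big(\gamma F_t[F_t,\pi(a_0)]\cdots[F_t,\pi(a_{m-1})]\big)$ in the graded situation and of its ungraded analogue otherwise. Differentiating in $t$ and rearranging by means of the Leibniz rule for $\tfrac{d}{dt}$ and for $[F_t,\pi(ab)] = [F_t,\pi(a)]\pi(b) + \pi(a)[F_t,\pi(b)]$, the cyclicity of the operator trace, and the Jacobi identity, one should be able to exhibit $\tfrac{d}{dt}\tau_t$ as $b\psi_t$, where $\psi_t \in C^\lambda_{m-2}(A)$ is the cyclic cochain obtained by inserting the velocity $\dot F_t = V$ into each slot of the operator word defining $\tau_t$, with alternating signs. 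Integrating over $t$ and invoking the fundamental theorem of calculus then yields $\tau_{\C G} - \tau_{\C F} = b\big(\int_0^1 \psi_t\,dt\big)$ with $\int_0^1 \psi_t\,dt \in C^\lambda_{m-2}(A)$, so that $\E{Ch}(\C F) = [\tau_{\C F}] = [\tau_{\C G}] = \E{Ch}(\C G)$ in $HC^{m-1}(A)$. A convenient way to organise the bookkeeping is first to replace the pair $(\C F,\C G)$ by the single module $\C F \oplus (-\C G)$, which is an $\C L^m$-perturbation of the ``double'' $\C F \oplus (-\C F)$; the latter carries an odd symmetry $\sigma = \left(\begin{smallmatrix}0&1\\1&0\end{smallmatrix}\right)$ commuting with the representation and anticommuting with its operator, hence is cobordant to a degenerate module, and the problem becomes that of pushing this cobordism through the perturbation — which is presumably the viewpoint adopted in Section \ref{cobordpert}.

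The step I expect to cause the real difficulty is that $F_t$ is not involutive for $0 < t < 1$: since $F_t^2 - 1 = -t(1-t)V^2$ need not vanish, the character along the path is really Connes' formula for non-normalised $m$-summable modules, and the naive transgression identity $\tfrac{d}{dt}\tau_t = b\psi_t$ picks up correction terms proportional to $F_t^2 - 1$. One must show that these correction terms themselves assemble into a cyclic coboundary — equivalently, that after renormalising the path $F_t \mapsto F_t(F_t^2)^{-1/2}$ (legitimate at least once the perturbation has been arranged to have small operator norm) the transgression of the genuine normalised characters closes up, and that the small-norm hypothesis may be removed by a subdivision or, better, by a direct algebraic manipulation insensitive to operator norms. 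Intertwined with this is the Schatten-class accounting: $F_t^2 - 1$ lies a priori only in $\C L^m(H)$, although the identity $F_t^2 - 1 = -t(1-t)V^2$ shows it is in fact in $\C L^{m/2}(H)$, which is what saves the estimates, and one has to verify at each stage that the corrected words still pair to finite traces. Once these two points are settled, the remaining tasks — cyclicity of $\psi_t$, the precise normalising constants, and the verification of $\tfrac{d}{dt}\tau_t = b\psi_t$ modulo coboundaries — are routine cyclic-cohomology algebra.
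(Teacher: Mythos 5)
Your overall strategy --- interpolate along $F_t = F + tV$ and transgress the index cocycle --- is in spirit exactly what the paper does: its generalized chain $\mathbf{\Omega}_{\mathcal T}$ lives over $\Omega^*([0,1])\hat\otimes\Omega_\tau$, the parameter $t$ interpolates between the connections attached to $F$ and $G$, and the curvature $\theta_\tau$ maps under $\pi$ to $-t(1-t)T^2 = F_t^2-1$, i.e.\ it is precisely the bookkeeping device for the non-involutivity you flag. But you have correctly located the hard point and then not resolved it, and your two proposed escapes do not work. First, the naive transgression identity $\tfrac{d}{dt}\tau_t = b\psi_t$ is false as stated inside $C^{m-1}_\lambda(A)$: for $0<t<1$ the functional $\tau_t$ is not even a cyclic cocycle (the cocycle property uses $F_t^2=1$), and the correction terms proportional to $F_t^2-1$ do not assemble into a coboundary in a single degree. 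They must be distributed over the lower components $\varphi^{m-2},\varphi^{m-4},\dots$ of a $(b,B)$-cochain; this is what Gorokhovsky's JLO-type Chern character of a chain with curvature supplies. The decisive structural fact --- absent from your write-up --- is that the \emph{top} component $\mathrm{Ch}(\mathbf{\Omega}_{\mathcal T})^m$ of this transgression cochain vanishes (the graded trace kills $\Omega^0([0,1])\hat\otimes(\Omega_\tau)_m$), which is what lets one strip the periodicity operator from the cobordism identity $(b+B)\mathrm{Ch}(\mathbf{\Omega}_{\mathcal T}) = S\,\mathrm{Ch}(\partial\mathbf{\Omega}_{\mathcal T})$ and conclude that $\tau_{\mathcal G}-\tau_{\mathcal F}$ itself, not $S(\tau_{\mathcal G}-\tau_{\mathcal F})$, is a coboundary. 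Without some version of this observation the transgression only proves equality in periodic cyclic cohomology, which is not the statement.

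Second, the fallbacks. Renormalising $F_t\mapsto F_t(F_t^2)^{-1/2}$ requires $F_t^2 = 1 - t(1-t)V^2$ to be invertible for all $t$, which fails whenever $\|V\|\geq 2$; and the small-norm hypothesis cannot be removed by subdividing $[0,1]$, because the intermediate operators $F+t_iV$ are not involutions, so the subdivision endpoints are not Fredholm modules and the partial results cannot be concatenated. Nor is any continuity or homotopy argument available: the Chern character into non-periodic $HC^{m-1}$ is precisely the object that is \emph{not} homotopy invariant, which is the whole point of the theorem. Finally, you should note that the paper also has to handle the non-unital case (the cocycles live in $\mathrm{Ker}(i^*:\mathcal B^*(\widetilde A)\to\mathcal B^*(\cc))$, and in the even case the degree-zero component of the primitive must be corrected so that it restricts to zero on $\cc$); this is a minor but necessary step your sketch omits. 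In short: right road map, but the central identity is asserted rather than proved, and the proposed repairs for the known obstruction would not go through.
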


Let us give a few comments on the proof of this result. We apply the language
of generalized chains and generalized cycles together with the explicit
formulas for their Chern characters in the $(b,B)$-complex, due to
A. Gorokhovsky, see \cite{gorok}. For any generalized chain $\F \Omega$ with
boundary $\F{\partial \Omega}$ we have the identity
\[
(b + B)(\T{Ch}(\F \Omega)) = S(\T{Ch}(\F{\partial \Omega}))
\]
in the $(b,B)$-complex, $\C B^*(A)$. Thus, the Chern character of the boundary
of a generalized chain is a cyclic coboundary after application of the
periodicity operator. This formula only seems to give information on the
vanishing of Chern characters in periodic cyclic cohomology. However, when the
toppart of the cyclic cochain $\T{Ch}(\F \Omega) =
(\varphi^m,\varphi^{m-2},\ldots) \in \C B^m(A)$ vanishes, we actually get that
the Chern character of the boundary $\T{Ch}(\F{\partial \Omega}) \in \C
B^{m-1}(A)$ is a cyclic coboundary. This observation was used by
A. Gorokhovsky to give a proof of an invariance result similar to Theorem
\ref{statemaini} in the even case and with slightly stronger summability
conditions, see \cite[Remark $4.1$]{gorok}. One should also remark the
invariance result of A. Connes in the odd case, see \cite[I $7$ Proposition
$4$]{connes}. In order to prove Theorem \ref{statemaini} we construct a
generalized chain $\F \Omega_{\C T}$ such that
\begin{enumerate}
\item The toppart of the Chern character $\T{Ch}(\F {\Omega_{\C T}})$
  vanishes.
\item The Chern character of the boundary 
\[
\T{Ch}(\F { \partial \Omega_{\C T}}) 
= \frac{1}{(m-1)!}(\tau_{\C G} - \tau_{\C F})
\]
is the difference of the index cocycles of the Fredholm modules $\C G$ and $\C
F$ up to a constant.
\end{enumerate}

Encouraged by the achievement of Theorem \ref{statemaini} we define a new
$K$-homology type functor. As mentioned earlier, the main idea consists of
replacing the compactness condition on Kasparov modules by the condition of
finite summability. Furthermore, the equivalence relation is given by a
finitely summable version of stable compact perturbation. We also distinguish
between even and odd by means of grading operators. Thus, for any algebra $A$
over $\cc$ we define a sequence of abelian groups
\[
\arr{ccc}{
FK^{m-1}(A) := \T{Ell}^{m-1}(A) / \sim_{fc} & \q & m \in \nn
}
\]
In dimension $(m-1)$ we have the $m$-summable Fredholm modules modulo the
equivalence relation of stable $m$-summable perturbation. Because of the
resemblance with the $K$-homology of $C^*$-algebras we call this abelian group
the \emph{$m$-summable $K$-homology} of $A$. Notice that we do not consider
operator homotopic finitely summable Fredholm modules to be equal. We expect
that such an equivalence relation would be to weak for the appropriate pairing
with algebraic $K$-theory to exist. See Theorem \ref{statemainii}.

The invariance result of Theorem \ref{statemaini} entails that the
Chern-Connes character of finitely summable Fredholm modules descends to a
natural homomorphism of degree $0$,
\[
\arr{ccc}{
\T{Ch} : FK^{m-1}(A) \to HC^{m-1}(A) & \q & \T{Ch}([\C F]) =
[\tau_{\C F}]
}
\]
In particular, we get a well-defined pairing
\begin{equation}\label{eq:cychompairity}
\tau : FK^*(A) \times HC_*(A) \to \cc
\end{equation}
between finitely summable $K$-homology and cyclic homology.

Furthermore, as hinted at earlier, the well-definedness of the pairing
\eqref{eq:cychompairity} essentially entails a factorization result for the
Connes-Karoubi multiplicative pairing. This is the main result of the present
paper and the primary reason for the introduction of finitely summable
$K$-homology.

\begin{prop}\label{statemainii}
The multiplicative character associated to $m$-summable Fredholm modules over
$A$ induces a pairing
\[
\C M : FK^{m-1}(A) \times K_m(A) \to \cc/(2 \pi i)^{\lceil m/2 \rceil}\zz
\]
between the $m$-summable $K$-homology group and the $m^{\E{th}}$ algebraic
$K$-group.
\end{prop}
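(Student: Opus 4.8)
The plan is to reduce the well-definedness of the pairing $\C M$ on $FK^{m-1}(A)$ to the invariance of the index cocycle's cyclic cohomology class, which is exactly Theorem \ref{statemaini}. Recall that the multiplicative character $\C M$ on $\T{Ell}^{m-1}(A) \times K_m(A)$ is, by the construction of Connes and Karoubi, determined through relative algebraic $K$-theory by the pair consisting of a Fredholm module $\C F$ and its associated index cocycle $\tau_{\C F} \in C^\lambda_{m-1}(A)$; more precisely, as recalled in the introduction (see \eqref{eq:calcformula} and \cite{kaadII,kaadI}), $\C M_{\C F}$ factors through the Chern-Connes character class $[\tau_{\C F}] \in HC^{m-1}(A)$ paired against the image of a class in $K_m(A)$ under a Chern-type map $K_m(A) \to HC_{m-1}(A)$, composed with the quotient $q : \cc \to \cc/(2\pi i)^{\lceil m/2\rceil}\zz$. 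First I would make this factorization explicit: there is a commuting triangle expressing $\C M(\C F, \cdot)$ as $q$ applied to $\langle \T{Ch}(\C F), \rho(\cdot)\rangle$ for a fixed natural transformation $\rho : K_m(A) \to HC_{m-1}(A)$ independent of $\C F$. This is essentially a citation to \cite{kaadI} together with the pairing \eqref{eq:cychompairity}.

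Given that factorization, the argument is short. If $\C F = (\pi,H,F)$ and $\C G = (\pi,H,G)$ are $m$-summable Fredholm modules over $A$ with $G - F \in \C L^m(H)$, then Theorem \ref{statemaini} gives $\T{Ch}(\C F) = \T{Ch}(\C G)$ in $HC^{m-1}(A)$, hence $\C M(\C F, \xi) = \C M(\C G, \xi)$ for every $\xi \in K_m(A)$. Next I would treat the remaining generators of the equivalence relation $\sim_{fc}$: unitary equivalence, addition of a degenerate ($m$-summable) Fredholm module, and the choice of inverses implicit in the group structure on $\T{Ell}^{m-1}(A)$. Unitary equivalence leaves the index cocycle, and hence $\T{Ch}$, unchanged by functoriality/naturality; a degenerate module has vanishing index cocycle and so contributes $0$ to the pairing; and compatibility with inverses follows because $\T{Ch}$ is a homomorphism of abelian groups (this is already asserted in the paragraph preceding the statement). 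Combining these, $\C M(\cdot, \xi)$ is constant on each $\sim_{fc}$-class, so it descends to a well-defined map $FK^{m-1}(A) \times K_m(A) \to \cc/(2\pi i)^{\lceil m/2\rceil}\zz$. Finally I would note that the induced map is bi-additive: additivity in the $FK^{m-1}(A)$ variable is inherited from the additivity of $\T{Ch}$ on $FK^{m-1}(A)$ together with bilinearity of the cyclic pairing, and additivity in $K_m(A)$ from that of $\rho$ and of the pairing.

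The main obstacle is not in the present theorem but in correctly invoking the precise shape of the Connes-Karoubi construction: one must verify that the full multiplicative character $\C M$ — not merely its computation on Loday products of exponentials as in \eqref{eq:calcformula} — depends on the Fredholm module only through $[\tau_{\C F}] \in HC^{m-1}(A)$. I would handle this by recalling from \cite{conkar,kaadI} that $\C M$ arises from the relative Chern character $\relcht : K_m^{\T{rel}} \to HC_{m-1}$ paired with the index cocycle, where the relative $K$-group is built from the pair $(\C L^m(H), \C L(H))$ via the representation $\pi$; since changing $F$ within its $m$-summable perturbation class does not change the algebra pair or the induced map on relative $K$-theory, and changes $\tau_{\C F}$ only by a cyclic coboundary (Theorem \ref{statemaini}), the value of $\C M$ is unaffected. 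Once this dependence is pinned down, everything else is formal.
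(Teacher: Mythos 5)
The central step of your argument --- that $\C M_{\C F}$ factors as $q\big(\langle \T{Ch}(\C F), \rho(\cdot)\rangle\big)$ for a fixed natural transformation $\rho : K_m(A) \to HC_{m-1}(A)$ --- is not available, and this is where the proposal breaks down. The Connes--Karoubi character is a genuinely secondary invariant: it is defined by pushing $[x] \in K_m(A)$ forward along $(\pi_{\C F})_*$ into $K_m(\C M^{m-1})$ for a universal Banach algebra $\C M^{m-1}$, lifting along the map $\theta : K_m^{\T{rel}}(\C M^{m-1}) \to K_m(\C M^{m-1})$ (which is surjective only because the topological $K$-theory of $\C M^{m-1}$ has been computed), and applying the additive character to the lift; the indeterminacy of the lift is exactly what produces the quotient by $(2\pi i)^{\lceil m/2\rceil}\zz$. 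For a general $\cc$-algebra $A$ there is no natural map $K_m(A) \to HC_{m-1}(A)$ through which this factors; the formula \eqref{eq:calcformula} is only a computation on Loday products of exponentials over commutative Banach algebras, not a description of $\C M_{\C F}$ on all of $K_m(A)$. Consequently your treatments of both perturbations and degenerate modules rest on a factorization that does not exist. (For degenerate modules the paper argues instead that $(\pi_{\C D})_*$ annihilates $K_m$ because it factors through $\C L(\C H)$, whose algebraic $K$-theory is trivial --- not that the index cocycle vanishes.)

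The second concrete error is the assertion that changing $F$ within its $m$-summable perturbation class ``does not change \ldots the induced map on relative $K$-theory.'' It does: the homomorphism $\pi_{\C F} : A \to \C M^{m-1}$ is built from an identification of the eigenspaces of $F$ (odd case) or of $\gamma$ (even case) with the universal Hilbert space, and the eigenspaces of $F$ move when $F$ is perturbed, so $(\pi_{\C F_1})_*$ and $(\pi_{\C F_2})_*$ are genuinely different homomorphisms on $K_m$. The paper's proof hinges on Lemma \ref{msummeff}: one constructs a continuous $m$-summable Fredholm module $\C T$ over $\C M^{m-1}$, itself an $m$-summable perturbation of the universal module, with $\pi_{\C F_2} = \pi_{\C T} \circ \pi_{\C F_1}$; then naturality of $\theta$ converts the comparison of $\C M_{\C F_1}$ and $\C M_{\C F_2}$ into the comparison of the additive characters $\C A_{m-1}$ and $\C A_{\C T}$ on $K_m^{\T{rel}}(\C M^{m-1})$, and only at that point does the invariance of the (continuous) Chern--Connes character under perturbations --- Theorem \ref{cocyequivcontI}, via Theorem \ref{pairaddi} --- enter. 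Your instinct that everything should reduce to Theorem \ref{statemaini} is right in spirit, but the reduction must pass through the universal algebra $\C M^{m-1}$ and its relative $K$-theory, not through a cyclic (co)homology pairing over $A$ itself.
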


The finitely summable $K$-homology groups can be equipped with some
interesting homomorphisms. First of all there are the natural periodicity
homomorphisms
\[
S : FK^*(A) \to FK^{*+2}(A)
\]
which are induced by the forgetful map related to the inclusion of Schatten
ideals $\C L^*(H) \subseteq \C L^{*+2}(H)$. The periodicity homomorphisms in
finitely summable $K$-homology are paralleled by the periodicity homomorphisms
in cyclic cohomology.

Furthermore, when $A$ is a pre-$C^*$-algebra there are natural comparison
homomorphisms
\[
\arr{ccc}{
\alpha : FK^{m-1}(A) \to K^j(\, \overline A \,)
& \q & j \in \{0,1\} \T{ parity as }m-1
}
\]
from finitely summable $K$-homology to analytic $K$-homology. Here $\overline
A$ denotes the $C^*$-algebra closure of $A$. This comparison homomorphism
comes from the inclusion of the Schatten ideals into the compact operators.

It is not unlikely that the comparison homomorphism becomes an isomorphism
from a certain degree for many interesting pre-$C^*$-algebras. In this
respect, compare with the case of cyclic cohomology and periodic cyclic
cohomology of the smooth functions on a manifold.

Let us finish this introduction by making some remarks on an earlier
definition which is related to our finitely summable $K$-homology.

In the papers, \cite{wangI,wangII}, X. Wang deals with contravariant functors
from a category of smooth algebras to abelian semi groups. These functors are
denoted by $\T{Ext}_\tau$ and depend on the choice of a certain operator ideal
$\C K_\tau$. The purpose is to classify smooth extensions of smooth algebras
by these operator ideals. When $\C K_\tau = \C L^m(H)$ and $m = 2k$ is even,
the definition of the functor $\T{Ext}_\tau$ is strongly related to the
definition of the $m$-summable $K$-homology functor, $FK^{m-1}$, which we will
introduce in the sequel. This provides a link between the present paper and
the work on smooth extensions carried out by R. Douglas, D. Voiculescu,
G. Gong and others, see \cite{douglas,douglasvoicu,gong}, for example.

We have organized our material as follows.

In the first section we consider the Chern-Connes character which maps a
finitely summable Fredholm to the class of its index cocycle in cyclic
cohomology. We are then able to prove that this homomorphism is invariant
under finitely summable perturbations of the finitely summable Fredholm
module.

In the second section we start by introducing the finitely summable
$K$-homology groups. We then discuss the invariance properties of the
secondary invariants which are associated to finitely summable Fredholm
modules. This includes proving the existence of a pairing between finitely
summable $K$-homology and algebraic $K$-theory which is induced by the
Connes-Karoubi multiplicative character.

{\bf Acknowledgements:} I would like to thank Ryszard Nest for the many
inspiring conversations which we have had at the University of Copenhagen. I
am also very grateful to Uuye Otgonbayar for drawing my attention to the
article \cite{gorok} of A. Gorokhovsky. This has helped me a lot in
simplifying the arguments of the paper.





\section{Finitely summable Chern characters and perturbations}\label{invarcyc}
In this section we study the invariance properties of the Chern-Connes
character
\[
\arr{ccc}{
\T{Ch} : \T{Ell}^{m-1}(A) \to HC^{m-1}(A) & \q & \T{Ch}(\C F) = [\tau_{\C F}]
}
\]
which sends an $m$-summable Fredholm module to the class of its index cocycle
in cyclic cohomology times a constant. Our main result is that the
Chern-Connes characters of two $m$-summable Fredholm modules $\C F =
(\pi,H,F)$ and $\C G = (\pi,H,G)$ coincide whenever the difference $G-F \in \C
L^m(H)$ lies in the $m^{\T{th}}$ Schatten ideal,
\[
\T{Ch}(\C F) = [\tau_{\C F}]
= [\tau_{\C G}] = \T{Ch}(\C G) \in HC^{m-1}(A)
\]
We construct an explicit cyclic cochain with coboundary equal to the
difference of the index cocycles.

In the first three subsections we review the following material:
\begin{enumerate}
\item Basic definitions in cyclic cohomology.
\item The explicit formulas in the $(b,B)$-bicomplex for Chern characters of
  generalized chains and generalized cycles.
\item The universal differential graded algebra associated to a graded
  algebra.
\end{enumerate}

In the last subsection we prove the invariance of the Chern character of an
$m$-summable Fredholm module under $m$-summable perturbations. The proof
relies heavily on the material covered in the first three subsections.



\subsection{Cyclic cohomology}\label{cyccohom}
This section contains a short review of the construction of cyclic cohomology
by means of the $(b,B)$-bicomplex. For more details on these matters we refer
to the book of A. Connes, \cite{connesII}.

Let $A$ be a unital $\cc$-algebra. The Hochschild cochains of degree $m \in
\nn_0$, $C^m(A)$, consist of the multilinear maps of $(m+1)$ variables
\[
C^m(A) := \T{Hom}_\cc(A \otimes A^{\otimes m},\cc)
\]
The Hochschild cochains can be equipped with two differentials: The Hochschild
coboundary $b : C^m(A) \to C^{m+1}(A)$ and the Connes coboundary $B : C^m(A)
\to C^{m-1}(A)$. Both of these operators have explicit descriptions and
satisfy the relations
\[
\arr{ccccc}{
b^2 = 0 & \q & B^2 = 0 & \q & bB = - Bb
}
\]
We can thus form the bicomplex
\[
\begin{CD}
\vdots & & \vdots \\
@AbAA @AbAA \\
C^m(A) @>B>> C^{m-1}(A) @>B>> \ldots \\
@AbAA @AbAA \\
C^{m-1}(A) @>B>> C^{m-2}(A) @>B>> \ldots \\
\vdots & & \vdots \\
@AbAA @AbAA \\
C^1(A) @>B>> C^0(A) \\
@AbAA \\
C^0(A)
\end{CD}
\]
which we will refer to as the $(b,B)$-bicomplex. The totalization of the
$(b,B)$-bicomplex is denoted by $\C B^*(A)$ and the cyclic cohomology is the
cohomology of this complex,
\[
HC^*(A) = H^*\big(\C B^*(A)\big)
\]
A cyclic $m$-cochain $\varphi \in \C B^m(A)$ is thus given by a sequence of
homomorphisms 
\[
\varphi = (\varphi^m,\varphi^{m-2},\ldots)
\]
and it is a cyclic $m$-cocycle precisely when
\[
(b+B)\varphi = (b\varphi^m,B\varphi^m + b\varphi^{m-2},\ldots) = 0
\]

The cyclic cohomology comes equipped with periodicity operators,
\[
\arr{ccc}{
S : HC^m(A) \to HC^{m+2}(A) & \q & S(\varphi^m,\varphi^{m-2},\ldots) =
(0,\varphi^m,\varphi^{m-2},\ldots)
}
\]
which are induced by a shift in the $(b,B)$-bicomplex.

The definition of cyclic cohomology can be extended to a general $\cc$-algebra
$A$. Let $\wit A$ denote the unitalization of $A$. The Hochschild cochains
over $\wit A$ can be restricted to Hochschild cochains over $\cc$ by means of
the inclusion $i : \cc \to \wit A$. This operation induces a cochain
homomorphism
\[
i^* : \C B^*(\wit A) \to \C B^*(\cc)
\]
By a slight abuse of notation we let $\C B^*(A) := \T{Ker}(i^*)$ denote the
kernel complex. The cyclic cohomology of $A$ is then defined as the cohomology
of $\C B^*(A)$, $HC^*(A) := H^*(\C B^*(A))$. When $A$ has a unit, the groups
obtained by this more general definition are isomorphic to the cyclic
cohomology groups defined above.

The cyclic cohomology is a contravariant functor from the category of algebras
over $\cc$ to the category of vector spaces over $\cc$.

On some occasions in the sequel we will make use of a continuous version of
cyclic cohomology. Let $A$ be a unital locally convex topological algebra with
topology defined by a fundamental system of seminorms $\{p_i\}_{i \in
  I}$. The continuous Hochschild $m$-cochains $C^m_{\T{cont}}(A)$ consists of
the Hochschild $m$-cochains $\varphi \in C^m(A)$ which are continuous in the
sense that there exists an $i \in I$ and a constant $C \in [0,\infty)$ with
\[
\arr{ccc}{
|\varphi(a_0,\ldots,a_m)| \leq C p_i(a_0)\clc p_i(a_m) 
& \T{for all} & a_0,\ldots,a_m\in A
}
\]
The continuous cyclic cohomology is then defined precisely as the algebraic
version of cyclic cohomology except that the Hochschild cochains are replaced
by continuous Hochschild cochains. The continuous cyclic cohomology is denoted
by $HC^*_{\T{cont}}(A)$. The definition of continuous cyclic cohomology is
extended to non-unital locally convex topological algebras in the same way as
algebraic cyclic cohomology. The continuous cyclic cohomology is a
contravariant functor from locally convex topological algebras to vector
spaces over $\cc$.



\subsection{Generalized chains and cobordisms}\label{genchaincobord}
In this section we will review the notions of generalized chains and
generalized cycles as well as the explicit formulas for their Chern
characters. The concrete formulas which we present was introduced by
A. Gorokhovsky, \cite{gorok}, whereas the general scheme of ideas is due to
A. Connes, \cite{connes,connesII}. Our exposition will follow \cite{gorok}
closely, but see also \cite{nistor}.

Let $A$ be a unital algebra over $\cc$. By a \emph{generalized chain}
$\mathbf{\Omega}=(\Omega,\partial\Omega,\nabla, \partial\nabla,\int)$ of
dimension $m \in \nn_0$ over $A$ we shall understand the following data:
\begin{enumerate}
\item Two unital graded algebras $\Omega = \oplus_{n=0}^\infty \Omega_n$ and
  $\partial\Omega = \oplus_{n=0}^\infty \partial \Omega_n$ together with a
  unital surjective graded algebra homomorphism $r : \Omega \to \partial
  \Omega$ and a unital algebra homomorphism $\rho : A \to \Omega_0$.
\item Two graded derivations of degree one (termed \emph{connections}) $\nabla
  : \Omega_* \to \Omega_{*+1}$ and $\partial \nabla : \partial \Omega_* \to
  \partial \Omega_{*+1}$ together with an element $\theta \in \Omega_2$ such
  that 
\[ 
\arr{ccccc}{
r \circ \nabla = \partial \nabla \circ r & \q & \nabla^2(\omega) = \theta
\omega - \omega \theta & \q & \nabla(\theta) = 0
}
\]
The element $\theta \in \Omega_2$ is called the \emph{curvature} of the
connection $\nabla$. 
\item A graded trace $\int : \Omega_m \to \cc$ satisfying
\[
\arr{ccc}{
\int \nabla \omega = 0 & \q & 
\forall \omega \in \Omega_{m-1} \T{ with } r(\omega)= 0
}
\]
\end{enumerate}

A generalized chain with $\partial \Omega = 0$ will be called a
\emph{generalized cycle}. A generalized chain with curvature $\theta = 0$ will
be called a \emph{chain}. A generalized chain with $\partial \Omega = 0$ and
curvature $\theta = 0$ will be called a \emph{cycle}. 
 
By the \emph{boundary} of the generalized chain $\mathbf{\Omega}=(\Omega, \partial
\Omega, \nabla,\partial \nabla, \int)$ we shall understand the generalized
cycle $\mathbf{\partial \Omega} = (\partial \Omega, \partial \nabla, \int')$ over
$A$ of dimension $(m-1)$. Here the graded trace $\int' : \partial \Omega_{m-1}
\to \cc$ is given by
\[
\arr{ccc}{
\int' \sigma = \int \nabla \omega & \T{for any} & \omega \in
\Omega_{m-1} \T{ with } r(\omega) = \sigma
}
\]
The unital algebra homomorphism $\partial \rho : A \to \partial \Omega_0$ is
defined as the composition $\partial \rho = r \circ \rho$. The curvature of
the boundary is given by $\partial \theta = r(\theta)$.

\begin{definition}
We say that two generalized cycles over $A$, $\bf{\Omega}^0$ and
$\bf{\Omega}^1$, are \emph{cobordant} if there exists a generalized chain
$\bf{\Omega}$ with boundary $\bf{\Omega}^1 \oplus \underline{\bf{\Omega}}^0$
such that $r \circ \rho = (\rho_1, \rho_0)$. Here $\underline{\bf{\Omega}}^0$
is obtained from $\bf{\Omega}^0$ by changing the sign of the graded trace
$\int^0$.
\end{definition}

The relation of cobordancy is an equivalence relation between generalized
cycles, it will be denoted by $\sim_{\T{co}}$.

To each generalized chain $\mathbf{\Omega}$ of dimension $m$ we associate
a cyclic cochain 
\[
\T{Ch}(\mathbf{\Omega}) 
= \big(\T{Ch}(\mathbf{\Omega})^m, \T{Ch}(\mathbf{\Omega})^{m-2},\ldots \big)
\in \C B^m(A)
\]
The cochain $\T{Ch}(\mathbf{\Omega})$ is called the \emph{Chern character} of
the generalized chain and is given by the following JLO-type formula
\[
\begin{split}
& \T{Ch}(\mathbf{\Omega})^{m-2k}(a_0,\ldots,a_{m-2k}) \\
& \q = 
(-1)^k / (m-k)! \sum_{i_0 \plp i_{m-2k} = k}
\int \rho(a_0)\theta^{i_0}\nabla(\rho(a_1))\theta^{i_1} \ldots
\nabla(\rho(a_{m-2k}))\theta^{i_{m-2k}} \\
& \qqqq k = 1,\ldots, \lfloor m/2 \rfloor
\end{split}
\]
This explicit description is due to A. Gorokhovsky, \cite{gorok}. The next
theorem clarifies the relation between the Chern character of a generalized
chain and the Chern character of its boundary.

\begin{prop}\cite[Theorem $2.1$]{gorok}\label{cobordant}
For each generalized chain $\mathbf{\Omega}$ of dimension $m$ over the unital
$\cc$-algebra $A$ with boundary $\mathbf{\partial \Omega}$ we have the
identity
\[
(B + b)\E{Ch}(\mathbf{\Omega}) = S \E{Ch}(\mathbf{\partial \Omega})
\]
in $\C B^{m+1}(A)$. 
\end{prop}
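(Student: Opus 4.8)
The plan is to verify the identity degree by degree in the totalization complex $\C B^{m+1}(A)$. Write $\T{Ch}(\mathbf{\Omega}) = (\varphi^m, \varphi^{m-2}, \ldots)$ and $\T{Ch}(\mathbf{\partial \Omega}) = (\psi^{m-1}, \psi^{m-3}, \ldots)$, where each $\varphi^{m-2k}$ and $\psi^{m-1-2k}$ is the explicit JLO-type expression from the formula preceding the statement, the latter computed using the graded trace $\int'$ on $\partial\Omega$, the connection $\partial\nabla$, and the curvature $\partial\theta = r(\theta)$. Both sides of $(B+b)\T{Ch}(\mathbf{\Omega}) = S\,\T{Ch}(\mathbf{\partial\Omega})$ are sequences indexed by the degrees $m+1, m-1, m-3, \ldots$; unwinding the definitions of $b$, $B$ and of the shift $S$, the claim amounts to the family of cochain identities
\[
b\varphi^m = 0, \q B\varphi^{m-2k} + b\varphi^{m-2k-2} = \psi^{m-1-2k} \quad (k = 0, 1, \ldots, \lfloor m/2\rfloor - 1),
\]
together with the bottom-degree relation $B\varphi^{m-2\lfloor m/2\rfloor} = \psi$ or $= 0$ according to the parity of $m$. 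So the real work is purely combinatorial: expand $b$ and $B$ applied to the Gorokhovsky cochains and match terms.

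First I would record the two structural facts that drive every cancellation. From the connection axioms we have the Leibniz rule $\nabla(\rho(a)\rho(b)) = \nabla(\rho(a))\rho(b) + \rho(a)\nabla(\rho(b))$ (since $\rho$ is an algebra homomorphism and $\rho(a)$ has degree $0$), the Bianchi-type identity $\nabla(\theta) = 0$, and the commutation rule $\nabla^2(\omega) = \theta\omega - \omega\theta$. For the $b$-computation, applying $b$ to $\varphi^{m-2k}$ inserts products $\rho(a_i a_{i+1})$ and a cyclic-type last term; using Leibniz to split $\nabla(\rho(a_i a_{i+1}))$ and using that $\int$ is a graded trace to rotate the final factor, all interior terms cancel telescopically, exactly as in the standard proof that the JLO cochain is an entire cocycle up to the curvature correction. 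What survives is precisely a term in which a factor $\nabla^2 = \theta(\cdot) - (\cdot)\theta$ appears, i.e. a commutator with $\theta$; by the graded trace property this reorganizes into the expression for $B\varphi^{m-2k-2}$, which is why $b\varphi^{m-2k} + (\text{something}) = 0$ does not hold on the nose but only after adding the $B$ of the next cochain.

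Next I would compute $B\varphi^{m-2k}$. The operator $B$ is (up to the cyclic antisymmetrizer) the insertion of a unit $\rho(1)$ in the first slot followed by a cyclic sum; on the Gorokhovsky cochain, $\nabla(\rho(1)) = 0$, so $B\varphi^{m-2k}$ collapses to a single cyclic sum of the graded trace of $\nabla(\rho(a_0))\theta^{i_0}\cdots$ with one fewer $a$'s and one more $\theta$. Comparing this with the boundary cochain: by definition $\int' r(\omega) = \int \nabla\omega$, so a term of $\psi^{m-1-2k}$ of the form $\int' \partial\rho(a_0)(\partial\theta)^{i_0}\partial\nabla(\partial\rho(a_1))\cdots$ pulls back, via $r\circ\nabla = \partial\nabla\circ r$ and $r(\theta) = \partial\theta$, to $\int \nabla\big(\rho(a_0)\theta^{i_0}\nabla(\rho(a_1))\cdots\big)$; expanding that outer $\nabla$ by Leibniz and $\nabla(\theta)=0$ produces exactly the residual $\theta$-commutator terms left over from the $b$-computation in the previous paragraph plus the $B$-terms, so the three contributions $b\varphi^{m-2k-2}$, $B\varphi^{m-2k}$, and $\psi^{m-1-2k}$ close up. The constants $(-1)^k/(m-k)!$ are arranged precisely so that the binomial bookkeeping (the number of ways the new $\theta$ or the new product lands among the $i_j$'s) works out; keeping track of these factorials and signs is the one genuinely error-prone step, so I would do it carefully in the top two degrees and then argue the general degree by the same pattern.

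The main obstacle, then, is not conceptual but bookkeeping: organizing the telescoping cancellation in $b\varphi^{m-2k}$ so that the leftover is visibly $-B\varphi^{m-2k-2} + \psi^{m-1-2k}$, and checking that the rational constants and the graded signs (coming from commuting odd-degree $\nabla(\rho(a_i))$ past each other and past $\theta^{i_j}$, and from the graded trace) match on both sides. The degree-$(m+1)$ component $b\varphi^m = 0$ is the cleanest case and a good warm-up: here there is no $\varphi^{m+2}$ and no $S$-shifted partner in that slot, so it is just the assertion that the top Gorokhovsky cochain is a Hochschild cocycle, which follows from Leibniz plus the trace property plus $\nabla(\theta) = 0$ with no curvature correction surviving. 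Since this is \cite[Theorem~$2.1$]{gorok}, I would ultimately cite Gorokhovsky for the full verification and only indicate this scheme.
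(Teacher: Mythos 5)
The paper offers no proof of this statement at all --- it is quoted verbatim as Gorokhovsky's Theorem 2.1 --- and your degree-by-degree sketch, which correctly reduces the identity to $b\varphi^m=0$ together with $B\varphi^{m-2k}+b\varphi^{m-2k-2}=\psi^{m-1-2k}$ and identifies $\int'\circ\, r=\int\circ\,\nabla$, the Leibniz rule, $\nabla(\theta)=0$ and $\nabla^2=[\theta,\cdot\,]$ as the engine of the cancellation, is consistent with (indeed reconstructs) the cited argument. The only blemish is an index slip in your second paragraph, where the leftover of $b\varphi^{m-2k}$ is said to reorganize into $B\varphi^{m-2k-2}$: since $b$ raises total degree and $B$ lowers it, it is $b\varphi^{m-2k-2}$ that pairs with $B\varphi^{m-2k}$ in the component of degree $m-1-2k$, exactly as you state correctly in your first and third paragraphs.
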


In particular, the Chern character of a generalized cycle over $A$ is a cyclic
\emph{cocycle} in $\C B^*(A)$. Furthermore, the difference of the Chern
characters associated with cobordant generalized cycles lies in the kernel of
the periodicity operator. In fact, as we shall see in the next theorem, this
characterizes the relation of cobordancy between generalized cycles.

\begin{prop}
Any two generalized cycles $\F{\Omega^0}$ and $\F{\Omega^1}$ of dimension
$m-1$ over the unital $\cc$-algebra $A$ are cobordant if and only if the
difference of their Chern characters in cyclic cohomology lies in the kernel
of the periodicity operator. That is
\[
\F \Omega^0 \sim_{\E{co}} \F \Omega^1 \Leftrightarrow [\E{Ch}(\F \Omega^0)] -
[\E{Ch}(\F \Omega^1)] \in \E{Ker}(S)
\]
\end{prop}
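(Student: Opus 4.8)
The plan is to prove the biconditional by establishing the two implications separately, using Proposition \ref{cobordant} for the easy direction and a universal construction for the hard one.

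For the forward implication, suppose $\F\Omega^0 \sim_{\T{co}} \F\Omega^1$, so there is a generalized chain $\F\Omega$ of dimension $m$ with boundary $\F\Omega^1 \oplus \underline{\F\Omega}^0$ and $r\circ\rho = (\rho_1,\rho_0)$. By Proposition \ref{cobordant} we have $(B+b)\T{Ch}(\F\Omega) = S\,\T{Ch}(\F{\partial\Omega})$ in $\C B^{m+1}(A)$. Now $\T{Ch}$ is additive for direct sums of generalized cycles (the graded trace on a direct sum is the sum of the traces, and the JLO-type formula is linear in $\int$), and changing the sign of $\int^0$ changes the sign of $\T{Ch}(\F\Omega^0)$; hence $\T{Ch}(\F{\partial\Omega}) = \T{Ch}(\F\Omega^1) - \T{Ch}(\F\Omega^0)$. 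Therefore $S(\T{Ch}(\F\Omega^1) - \T{Ch}(\F\Omega^0)) = (B+b)\T{Ch}(\F\Omega)$ is a coboundary in $\C B^{m+1}(A)$, which says precisely that $S$ kills the class $[\T{Ch}(\F\Omega^1)] - [\T{Ch}(\F\Omega^0)] \in HC^{m-1}(A)$, i.e. this difference lies in $\T{Ker}(S)$.

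For the converse, assume $[\T{Ch}(\F\Omega^0)] - [\T{Ch}(\F\Omega^1)] \in \T{Ker}(S)$. Set $\varphi := \T{Ch}(\F\Omega^0) - \T{Ch}(\F\Omega^1) \in \C B^{m-1}(A)$, a cyclic cocycle whose image under $S$ is a coboundary: $S\varphi = (B+b)\psi$ for some $\psi = (\psi^m,\psi^{m-2},\ldots) \in \C B^m(A)$. Unwinding the shift defining $S$, the top component of $(B+b)\psi$ is $b\psi^m = 0$ and the next component recovers $\varphi^{m-1} = B\psi^m + b\psi^{m-2}$, etc.; in particular $\psi^m$ is a Hochschild cocycle and $\varphi$ equals $(B+b)\psi$ after discarding the top slot. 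The strategy is now to realize $\psi$ (after a harmless adjustment making its top part vanish, using that a Hochschild cocycle becomes a coboundary after stabilization, or by replacing $\psi$ with $\psi - (B+b)\eta$ for suitable $\eta$) as the Chern character of a generalized chain $\F\Omega$ whose boundary is $\F\Omega^0 \oplus \underline{\F\Omega}^1$. Concretely, I would invoke the universal differential graded algebra machinery reviewed in \S\ref{genchaincobord}: from the pair of generalized cycles together with the chain homotopy datum $\psi$, one assembles a generalized chain on the algebra built from $\Omega^0$, $\Omega^1$ and the tensor/cone construction that carries the cochain $\psi$, arranging the graded trace so that its restriction to the boundary is the difference of the two graded traces (this is exactly the inverse of the computation in the forward direction).

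The main obstacle is the converse direction: turning an abstract chain homotopy $\psi$ between the two Chern characters into an honest geometric cobordism $\F\Omega$. This requires a universal model — the free graded differential algebra of a cochain, or Connes' construction of the cycle/chain associated to a cyclic cochain — together with a careful verification that all the axioms of a generalized chain hold (existence of the connections $\nabla$, $\partial\nabla$ with the prescribed curvature relations, the graded trace condition $\int\nabla\omega = 0$ on the kernel of $r$, and compatibility $r\circ\rho = (\rho_0,\rho_1)$). The bookkeeping showing that the Chern character of this constructed chain equals $\psi$ in $\C B^m(A)$, so that its boundary's Chern character is $\varphi$ and hence (since the Chern character detects the cobordism class at the level of the JLO formula) the boundary is genuinely $\F\Omega^0 \oplus \underline{\F\Omega}^1$, is where the real work lies; the forward implication and the algebraic manipulation of the $S$-operator are routine by comparison.
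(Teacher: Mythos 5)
Your forward implication is correct and is essentially the standard argument: additivity of the Chern character over direct sums, the sign flip under negating the graded trace, and Theorem \ref{cobordant} give $S(\T{Ch}(\F\Omega^1)-\T{Ch}(\F\Omega^0)) = (b+B)\T{Ch}(\F\Omega)$, hence the difference of classes lies in $\T{Ker}(S)$. Note that the paper itself does not write out a proof at all: it simply cites Gorokhovsky's Theorem 2.6 and Connes, \emph{Noncommutative Geometry}, III.$1.\beta$ Theorem 21, where both implications (in particular the hard converse) are established.

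For the converse your text is a plan rather than a proof, and the one place where you do assert a conclusion contains a genuine gap. You propose to build a generalized chain $\F\Omega$ out of the homotopy datum $\psi$ with $S\varphi = (b+B)\psi$, and then conclude that "since the Chern character detects the cobordism class at the level of the JLO formula, the boundary is genuinely $\F\Omega^0 \oplus \underline{\F\Omega}^1$." That inference is not valid: cobordancy requires a chain whose boundary data $(\partial\Omega, \partial\nabla, \int')$ \emph{literally is} the direct sum of the two given cycles, with $r\circ\rho = (\rho_1,\rho_0)$; having a chain whose boundary merely has the same Chern character proves nothing, and indeed the whole point of the theorem is that the Chern character does not detect cobordism on the nose (only up to $\T{Ker}(S)$). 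The standard proof (Connes, loc.\ cit.) instead first reduces to the case of a single cycle with vanishing $S$-class by forming $\F\Omega^0 \oplus \underline{\F\Omega}^1$, then realizes the cochain $\psi$ itself as coming from an explicit universal chain (a cone/tensor-with-$\Omega^*([0,1])$ construction on the universal differential graded algebra) whose boundary is arranged by hand to be the given cycle plus a degenerate piece that is itself a boundary. Until you carry out that construction and verify the chain axioms --- in particular the trace condition $\int\nabla\omega = 0$ for $\omega \in \T{Ker}(r)$ --- the converse direction is not established.
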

\begin{proof}
This follows from \cite[Theorem $2.6$]{gorok} and \cite[III $1.\beta$ Theorem
$21$]{connesII}.
\end{proof}



\subsection{The universal differential graded algebra of a graded
  algebra}\label{udgaga}
We will now recall the construction of universal differential graded algebras
of graded algebras. This is a natural adaption to the graded case of the
universal differential graded algebras considered in \cite{connesII, karoubiI,
  karoubiII}, for example. We will find use for this construction in Section
\ref{cobordpert} where we build an explicit generalized chain which implements
the cobordism between the cycles associated with Fredholm modules which are
perturbations of each other.

Let $A = \oplus_{i=0}^\infty A_i$ be a unital graded algebra over $\cc$. We
let $\overline{A} = A/\cc$ denote the vector space quotient. Let $k \in
\nn_0$. To each multi index $I = (i_0,\ldots,i_k) \in \nn_0^{k+1}$ we
associate the $\cc$-vector space
\[
\Omega_I(A) = \fork{ccc}{
A_{i_0} \otimes_\cc \overline{A}_{i_1} \otimes_\cc \ldots \otimes_\cc
\overline{A}_{i_k}
& \T{for} & k \geq 1 \\
A_{i_0} & \T{for} & k = 0
}
\]
The degree of the elements in $\Omega_I(A)$ is given by $|I| = i_0 \plp i_k +
k$.

For each $n \in \nn_0$ we have the $\cc$-vector space $\Omega_n(A)$ defined as
the direct sum
\[
\Omega_n(A) = \bigoplus_{k=0}^n \bigoplus_{I \in \nn_0^{k+1},\, |I| = n}
\Omega_I(A)
\]
As a $\cc$-vector space the universal differential graded algebra of the
unital graded $\cc$-algebra $A$ is given by the direct sum
\[
\Omega(A) = \bigoplus_{n=0}^\infty \Omega_n(A)
\]

The differential on $\Omega(A)$ is given by 
\[
\arr{ccc}{
d : \Omega_n(A) \to \Omega_{n+1}(A) 
& \q & d(\omega_0 \olo \omega_k) = 1_A \otimes \omega_0 \olo \omega_k
}
\]
Clearly we then have $d^2 = 0$.

We endow $\Omega(A)$ with the structure of an $A$-bimodule by defining the
following products
\[
\begin{split}
& \omega \cdot (\omega_0 \olo \omega_k) 
= (\omega \omega_0) \otimes \omega_1 \olo \omega_k \qq \T{and} \\
& (\omega_0 \olo \omega_k) \cdot \omega 
= \omega_0 \olo \omega_{k-1} \otimes (\omega_k \omega) \\
& \qq + \sum_{i=0}^{k-1} 
(-1)^{|\omega_{i+1}| \plp |\omega_k| + k - i}
\omega_0 \olo (\omega_i \omega_{i+1}) \olo \omega_k \otimes \omega
\end{split}
\] 
of homogeneous elements $\omega_0 \olo \omega_k \in \Omega_n(A)$ and elements
$\omega \in A$. 

The graded multiplicative structure on $\Omega(A)$ is given by the product
\[
(\omega_0 \olo \omega_k) \cdot (\omega_0' \otimes \omega_1' \olo \omega_l') 
= \big( (\omega_0 \olo \omega_k) \cdot \omega_0' \big) \otimes \omega_1' \olo \omega_l'
\]
of homogeneous elements $\omega_0 \olo \omega_k \in \Omega_n(A)$ and
$\omega_0' \olo \omega_l' \in \Omega_m(A)$.

It can be proved by a straightforward computation that the differential $d :
\Omega_*(A) \to \Omega_{*+1}(A)$ is a graded derivation.

We let $\rho : A \to \Omega(A)$ denote the unital homomorphism of graded
algebras which is given by inclusion. Our differential graded algebra
$\Omega(A)$ satisfies a universal property, justifying its name:

\begin{prop}
Let $A = \oplus_{i=0}^\infty A_i$ be a unital graded algebra over $\cc$ and
let $\Omega = \oplus_{n=0}^\infty \Omega_n$ be a unital differential graded
algebra over $\cc$. Suppose that we have a unital homomorphism of \emph{graded
  algebras} $\phi : A \to \Omega$. Then there is a \emph{unique} unital
homomorphism of differential graded algebras $\Omega(\phi) : \Omega(A) \to
\Omega$ such that
\[ 
\Omega(\phi) \circ \rho = \phi
\]
\end{prop}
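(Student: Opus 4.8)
The plan is the standard one for a universal-object statement: first show that $\Omega(A)$ is generated, as a differential graded algebra, by $\rho(A)$, which forces the formula for $\Omega(\phi)$ and hence yields uniqueness; then define $\Omega(\phi)$ by that forced formula, check it is well defined, and verify it is a homomorphism of differential graded algebras. For the generation statement I would first record the identity
\[
\rho(\omega_0) \cdot d\rho(\omega_1) \clc d\rho(\omega_k) = \omega_0 \olo \omega_k
\]
for homogeneous $\omega_0,\ldots,\omega_k \in A$. This is a direct unwinding of the bimodule and product formulas defining $\Omega(A)$: one has $d\rho(\omega_j) = 1_A \otimes \omega_j$, and multiplying these out using the left- and right-module rules produces $\omega_0 \olo \omega_k$ together with terms ending in the tensor factor $1_A$, which represents $0$ in $\overline{A}_0 = A_0/\cc$ and hence vanish. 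Since the elements $\omega_0 \olo \omega_k$ (with $\omega_0 \in A$ and $\omega_1,\ldots,\omega_k$ homogeneous representatives of classes in $\overline{A}$) span $\Omega(A)$, any unital homomorphism of differential graded algebras $\psi$ with $\psi \circ \rho = \phi$ must satisfy $\psi(\omega_0 \olo \omega_k) = \phi(\omega_0)\, d_\Omega\phi(\omega_1) \clc d_\Omega\phi(\omega_k)$, where $d_\Omega$ denotes the differential of the target $\Omega$. This proves uniqueness.

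For existence I would take this last formula as the definition of $\Omega(\phi)$ on each summand $\Omega_I(A)$. It is well defined because the assignment is multilinear in $\omega_0,\ldots,\omega_k$ and, since $\phi$ is unital and $d_\Omega(1_\Omega) = 0$ in any unital differential graded algebra, $d_\Omega\phi$ vanishes on scalars, so the formula descends through the vector space quotients in the slots $1,\ldots,k$. It is then immediate that $\Omega(\phi)$ is $\cc$-linear, degree-preserving (each factor $d_\Omega\phi(\omega_j)$ contributes degree $|\omega_j|+1$, matching $|I| = i_0 \plp i_k + k$), unital, and satisfies $\Omega(\phi) \circ \rho = \phi$. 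Compatibility with the differentials is quick: from $d(\omega_0 \olo \omega_k) = 1_A \otimes \omega_0 \olo \omega_k$ one gets $\Omega(\phi)(d(\omega_0 \olo \omega_k)) = d_\Omega\phi(\omega_0) \clc d_\Omega\phi(\omega_k)$, while applying the graded Leibniz rule to $d_\Omega\big(\phi(\omega_0)\, d_\Omega\phi(\omega_1) \clc d_\Omega\phi(\omega_k)\big)$ and using $d_\Omega^2 = 0$ kills every term but the first, giving the same expression.

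The one point requiring genuine work is multiplicativity, and the hard part is matching the signs built into the $A$-bimodule structure of $\Omega(A)$ with the Koszul signs produced by the graded Leibniz rule in the target. I would isolate the key case: for homogeneous $\omega_0,\ldots,\omega_k,\omega \in A$,
\[
\Omega(\phi)\big((\omega_0 \olo \omega_k) \cdot \omega\big) = \Omega(\phi)(\omega_0 \olo \omega_k) \cdot \phi(\omega).
\]
To establish it, expand the left-hand side by the explicit right-module formula, and expand the right-hand side $\phi(\omega_0)\, d_\Omega\phi(\omega_1) \clc d_\Omega\phi(\omega_k)\, \phi(\omega)$ by repeatedly rewriting $\phi(\omega_j)\, d_\Omega(\cdots)$ through the graded Leibniz rule and using $\phi(\omega_i\omega_{i+1}) = \phi(\omega_i)\phi(\omega_{i+1})$; the terms then correspond bijectively, and the sign $(-1)^{|\omega_{i+1}| \plp |\omega_k| + k - i}$ in the module formula is precisely the Koszul sign incurred when $d_\Omega$ is commuted past the factors $d_\Omega\phi(\omega_{i+1}) \clc d_\Omega\phi(\omega_k)$, each of odd-shifted degree $|\omega_j|+1$. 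Granting this, full multiplicativity follows formally: for a general $y = \omega_0' \olo \omega_l'$ the graded product formula and the obvious fact that $d_\Omega\phi(\omega_1) \clc d_\Omega\phi(\omega_k) \cdot d_\Omega\phi(\omega_1') \clc d_\Omega\phi(\omega_l')$ is the image under $\Omega(\phi)$ of the concatenated tensor reduce $\Omega(\phi)(xy)$ to the key case above, yielding $\Omega(\phi)(xy) = \Omega(\phi)(x)\, \Omega(\phi)(y)$. The left-module compatibility $\Omega(\phi)(\omega \cdot x) = \phi(\omega)\, \Omega(\phi)(x)$ is immediate from the left-module formula together with $\Omega(\phi) \circ \rho = \phi$. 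This shows $\Omega(\phi)$ is a unital homomorphism of differential graded algebras with $\Omega(\phi) \circ \rho = \phi$, which together with the uniqueness established above completes the proof.
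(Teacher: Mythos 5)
Your proof is correct and takes essentially the same approach as the paper: the paper's own proof simply exhibits the same formula $\Omega(\phi)(\omega_0\otimes\ldots\otimes\omega_k)=\phi(\omega_0)\,d_\Omega(\phi(\omega_1))\ldots d_\Omega(\phi(\omega_k))$ and declares uniqueness immediate. You have supplied the verifications the paper omits (well-definedness through the quotient by scalars, compatibility with the differentials, multiplicativity with the Koszul signs, and the generation argument underlying uniqueness), all of which check out.
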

\begin{proof}
The unital homomorphism of differential graded algebras $\Omega(\phi) :
\Omega(A) \to \Omega$ is given by
\[
\Omega(\phi)(\omega_0 \olo \omega_k) =
\phi(\omega_0)d_\Omega(\phi(\omega_1)) \ldots d_\Omega(\phi(\omega_k)) 
\]
Here $d_\Omega : \Omega_* \to \Omega_{* + 1}$ denotes the differential on
$\Omega$. Note that $d_\Omega$ is a graded derivation for the multiplicative
structure on $\Omega$ by assumption.

The uniqueness of the construction is immediate.
\end{proof}





\subsection{Perturbations of Fredholm modules and
  cobordisms}\label{cobordpert}
In this section we will prove that the Chern-Connes character of $m$-summable
Fredholm modules with values in the cyclic cohomology group $HC^{m-1}(A)$ is
invariant under $m$-summable perturbations. This is a stronger version of a
result proved by A. Gorokhovsky, \cite[Remark $4.1$]{gorok}. This section also
provides the motivation for the definition of the finitely summable
$K$-homology groups which we give in Section \ref{finsumk}. Indeed, Theorem
\ref{cocyequivI} is really what ensures us, that we obtain a (non-trivial)
multiplicative pairing between algebraic $K$-theory and these new $K$-homology
type groups.

Let $A$ be an algebra over $\cc$ and let $m \in \nn$ be a positive
integer. For a Hilbert space $\C H$ we will denote the $m^{\T{th}}$ Schatten
ideal by $\C L^m(\C H)$.

\begin{definition}\label{finfred}
By an $m$-summable Fredholm module, $\C F = (\pi,H,F)$, over $A$ we will
understand the given of a separable Hilbert space $H$, an algebra homomorphism
$\pi : A \to \C L(H)$ and a bounded operator $F \in \C L(H)$ such that
\begin{enumerate}
\item $F^2 - 1 = 0$.
\item $F-F^* = 0$.
\item $[F,\pi(a)] \in \C L^m(H)$.
\end{enumerate}
for all $a \in A$. In the case where $(m-1)$ is even we will also assume the
existence of a $\zz/(2 \zz)$ grading operator $\gamma \in \C L(H)$ which
anticommutes with $F$ and commutes with all elements in $\pi(A)$. In order to
unify the notation we will use the convention that $\gamma = \E{Id}$ when
$(m-1)$ is odd.

When $A$ is a locally convex topological algebra we will say that the
$m$-summable Fredholm module $\C F = (\pi,H,F)$ is \emph{continuous} if the
maps
\[
\arr{ccc}{
a \mapsto \pi(a) \in \C L(H) & \T{and} & a \mapsto [F,\pi(a)] \in \C L^m(H)
}
\]
are continuous.
\end{definition}

Let $\C F = (\pi,H,F)$ be an $m$-summable Fredholm module over $A$. We recall
from \cite[Chapter IV]{connesII} that there is an associated index cocycle
\[
\tau_{\C F} = (\tau_{\C F},0,\ldots) \in \C B^{m-1}(A)
\]
given by
\[
\arr{ccc}{
\tau_{\C F}(x_0,\ldots, x_{m-1})
= \frac{1}{2}\T{Tr}(\gamma^m F[F,x_0]\clc [F,x_{m-1}])
& \q & x_0,\ldots,x_{m-1} \in \wit A
}
\]
Here $\wit A$ denotes the unitalization of $A$. We will refer to the class of
the index cocycle in cyclic cohomology
\[
\T{Ch}(\C F) := [\tau_{\C F}] \in HC^{m-1}(A)
\]
as the \emph{Chern-Connes character} of $\C F$.

When $A$ is a locally convex topological algebra and $\C F$ is continuous the
index cocycle determines a class
\[
\T{Ch}^{\T{cont}}(\C F) := [\tau^{\T{cont}}_{\C F}] \in HC^{m-1}_{\T{cont}}(A)
\]
in continuous cyclic cohomology. We will call this class the \emph{continuous
  Chern-Connes} character of $\C F$.

\begin{definition}\label{pertfredI}
We say that two $m$-summable Fredholm modules $\C F = (\pi, H,F)$ and $\C G =
(\pi,H,G)$ over $A$ are \emph{$m$-summable perturbations} of each other if the
difference of operators
\[
G - F \in \C L^m(H)
\]
lies in the $m^{\E{th}}$ Schatten ideal. When the dimension, $(m-1)$, is even
we will also require that the grading operators of the two Fredholm modules
agree.
\end{definition}

Let $\C F = (\pi, H, F)$ and $\C G = (\pi, H, G)$ be two $m$-summable Fredholm
modules which are $m$-summable perturbations of each other. The aim of the
present section is to prove that the Chern-Connes characters of $\C F$ and $\C
G$ determine the same class in cyclic cohomology. Thus, we will show that
\[
\T{Ch}(\C F) = [\tau_{\C F}] = [\tau_{\C G}] = \T{Ch}(\C G) \in HC^{m-1}(A)
\]
This result should be compared with \cite[Theorem $4.1$]{gorok} and
\cite[Theorem $2.11$]{nistor}.

Let $T = G-F \in \C L^m(H)$ denote the difference of $G$ and $F$. We will
constantly make use of the following identity.

\begin{lemma}\label{fundid}
\[
FT + TF + T^2 = 0
\]
\end{lemma}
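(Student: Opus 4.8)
The plan is to verify the identity $FT + TF + T^2 = 0$ by a direct computation using the defining properties of the two Fredholm modules $\C F = (\pi, H, F)$ and $\C G = (\pi, H, G)$, together with the relation $T = G - F$. The key facts to exploit are the self-adjoint involution conditions $F^2 = 1$ and $G^2 = 1$ (items (1) and (2) of Definition \ref{finfred} applied to both modules), which is the only structure we need here.

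First I would substitute $G = F + T$ into the relation $G^2 = 1$. Expanding,
\[
1 = G^2 = (F+T)^2 = F^2 + FT + TF + T^2 = 1 + FT + TF + T^2,
\]
where I used $F^2 = 1$ in the last step. Subtracting $1$ from both sides yields $FT + TF + T^2 = 0$, which is exactly the claimed identity.

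There is essentially no obstacle: the statement is an immediate algebraic consequence of the fact that both $F$ and $G = F + T$ square to the identity. The only point worth a remark is that $T = G - F \in \C L^m(H)$ by the assumption that $\C F$ and $\C G$ are $m$-summable perturbations of each other (Definition \ref{pertfredI}), so every term $FT$, $TF$, $T^2$ in fact lies in the Schatten ideal $\C L^m(H)$; but the identity itself holds at the level of bounded operators and does not require this refinement. One could also note that the same expansion applied with the roles of $F$ and $G$ reversed, writing $F = G + (-T)$ and using $F^2 = 1$, gives $-GT - TG + T^2 = 0$, i.e. $GT + TG = T^2$, a companion identity that may be convenient later; but for the lemma as stated the one-line computation above suffices.
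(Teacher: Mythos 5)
Your proof is correct and coincides with the paper's own argument: both expand $(F+T)^2 = G^2 = 1$ and cancel $F^2 = 1$ to obtain $FT + TF + T^2 = 0$. Nothing further is needed.
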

\begin{proof}
We have that
\[
F^2 + T^2 + FT + TF = (F+T)^2 = 1 = F^2
\]
and the result follows immediately.
\end{proof}

Let us add an element $\tau$ of order $1$ to the unitalization $\wit A$. That
is, we let $\wit A_\tau$ denote the unital graded algebra over $\cc$ which as
a vector space is given by
\[
\wit A_\tau 
:= \bigoplus_{j=1}^\infty \big( \, \wit A \, \big)^{\otimes_\cc j}
\]
The degree of an element $x = x_0 \olo x_n \in \wit A_\tau$ is defined as $|x|
= n$. The multiplication on $\wit A_\tau$ is determined by the rule,
\[
\begin{split}
(x_0 \olo x_n) \cdot (y_0 \olo y_m) 
& =  (x_0 \olo x_{n-1}) \otimes (x_ny_0)
\otimes (y_1  \olo y_m) \\
& \qq x_0,\ldots,x_n, y_0,\ldots,y_m \in \wit A
\end{split}
\]
on simple tensors. We will use the special notation $\tau$ for the element
\[
\tau := 1_{\wit A} \otimes 1_{\wit A} \in \big(\wit A_\tau\big)_1
\]
of degree $1$.

Let us consider the unital differential graded algebra $\Omega_\tau$ defined
as the universal differential graded algebra of $\wit A_\tau$ modulo the
relation $d(\tau) + \tau^2 \sim 0$. That is, we let
\[
\Omega_\tau := \Omega(A_\tau) / \big( d(\tau) + \tau^2  \sim 0 \big)
\]
with differential induced by the differential on $\Omega(A_\tau)$. In this
respect, notice that
\[
d( d(\tau) + \tau^2 ) = d(\tau^2) = (d(\tau) + \tau^2)\tau - \tau (d(\tau) + \tau^2)
\]

We will use $\Omega_\tau$ to construct a generalized chain $\F{\Omega_{\C T}}$
over $\wit A$ which provides a cobordism between two generalized cycles
$\F{\wih{\Omega}_{\C F}}$ and $\F{\wih{\Omega}_{\C G}}$ over $\wit A$. We
shall then see in Lemma \ref{coinchernwed} that the Chern characters of these
two generalized cycles coincides with the index cocycles of $\C F$ and $\C G$
up to a constant.

Furthermore, the nature of the generalized chain $\F{\Omega_{\C T}}$ together
with the explicit formula for the associated cyclic cochain will allow us to
conclude that the difference of cyclic cocycles
\[
\T{Ch}(\F{\wih{\Omega}_{\C G}}) - \T{Ch}(\F{\wih{\Omega}_{\C F}})
\in \C B^{m-1}(\wit A)
\]
is a cyclic coboundary \emph{without any application of the periodicity
  operator}. This is thus a stronger result than what is initially provided by
the cobordism relation of Theorem \ref{cobordant}.

The proof of the desired invariance result will then follow from some
straightforward argumentation concerning the restriction cochain homomorphism
$i^* : \C B^*(\wit A) \to \C B^*(\cc)$.

Let us continue with the construction of the generalized chain $\F{\Omega_{\C
    T}}$ over $\wit A$. We start by extending the algebra homomorphism $\pi :
A \to \C L(H)$ to a unital algebra homomorphism $\pi : \Omega_\tau \to \C
L(H)$. We do this in the following way:
\begin{enumerate}
\item To begin with, we define the unital algebra homomorphism $\pi : \wit
  A_\tau \to \C L(H)$ by
\[
\arr{ccc}{
\pi : \tau \mapsto T \in \C L(H) & \T{and} & (a,\lambda) \mapsto \pi(a) +
\lambda \in \C L(H)
}
\]
\item We then define the unital algebra homomorphism $\pi : \Omega(\wit A_\tau) \to \C
  L(H)$ by
\[
\pi (\omega_0 \olo \omega_k) = \pi(\omega_0)[F, \pi(\omega_1)] \ldots [F,
\pi(\omega_k)]
\]
Here the commutators $[F, \pi(\omega_i)]$ are graded, thus
\[
[F, \pi(\omega_i)] = F \pi (\omega_i) + (-1)^{|\omega_i| + 1} \pi(\omega_i) F
\]
Note that this is well-defined even when the Hilbert space $H$ is trivially
graded.
\item Finally, it follows from the identity $FT + TF + T^2 = 0$ of Lemma
  \ref{fundid}, that the unital algebra homomorphism $\pi : \Omega(\wit
  A_\tau) \to \C L(H)$ descends to the desired unital algebra homomorphism
  $\pi : \Omega_\tau \to \C L(H)$.
\end{enumerate}

In order to define the graded trace of the generalized chain $\F{\Omega_{\C
    T}}$ of dimension $m$, we need to show that the unital algebra
homomorphism $\pi : \Omega_\tau \to \C L(H)$ maps homogeneous elements of
degree $m$ to trace class operators.

\begin{lemma}
For each element $\omega \in (\Omega_\tau)_m$ the operator $\pi(\omega) \in
\C L^1(H)$ is of trace class.
\end{lemma}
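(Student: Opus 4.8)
The plan is to reduce the statement to a finite-dimensional bookkeeping exercise about the word length of homogeneous elements of $\Omega_\tau$ once they are pushed forward to $\C L(H)$ via $\pi$. First I would recall what the generators look like. A homogeneous element of degree $m$ in $\Omega(\wit A_\tau)$ is a linear combination of simple tensors $\omega_0 \olo \omega_k$ with $\omega_i \in \wit A_\tau$ and total degree $|\omega_0| \plp |\omega_k| + k = m$, and $\pi$ sends such a tensor to $\pi(\omega_0)[F,\pi(\omega_1)]\ldots[F,\pi(\omega_k)]$. Since each $\omega_i$ is itself a simple tensor $x_{i,0} \olo x_{i,n_i}$ in $\wit A_\tau$ with $|\omega_i| = n_i$, and the only generator of positive degree is $\tau$ (which maps to $T \in \C L^m(H)$), I would first observe that $\pi(\omega_i)$ is a product of $n_i$ copies of $T$ together with some bounded operators from $\pi(\wit A)$ — hence $\pi(\omega_i) \in \C L^{m/n_i}(H)$ when $n_i \geq 1$, and $\pi(\omega_i)$ is merely bounded when $n_i = 0$. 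Likewise $[F,\pi(\omega_i)]$ lies in $\C L^m(H)$ when $n_i = 0$, because of the finite-summability axiom $[F,\pi(a)] \in \C L^m(H)$ from Definition~\ref{finfred}, and it lies in $\C L^{m/n_i}(H)$ when $n_i \geq 1$.

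The key step is then to count. For a summand $\omega_0 \olo \omega_k$ of degree $m$, write $n_i = |\omega_i|$, so that $n_0 + \cdots + n_k + k = m$. Applying the generalized Hölder inequality for Schatten ideals, $\C L^{p_0}(H) \cdots \C L^{p_r}(H) \subseteq \C L^p(H)$ whenever $\sum 1/p_j = 1/p$, I want to show that the product $\pi(\omega_0)[F,\pi(\omega_1)]\ldots[F,\pi(\omega_k)]$ has total ``Schatten weight'' at least $1$. The factor $\pi(\omega_0)$ contributes weight $n_0/m$ (or $0$ if $n_0 = 0$), and each factor $[F,\pi(\omega_i)]$ for $i \geq 1$ contributes weight $(n_i+1)/m$: indeed if $n_i = 0$ it lies in $\C L^m(H)$, weight $1/m$, and if $n_i \geq 1$ it lies in $\C L^{m/n_i}(H)$, weight $n_i/m$ — but in that case I get an extra unit of weight by a small refinement, or simply note $n_i/m \geq (n_i+1 - 1)/m$; more cleanly, writing $[F,\pi(\omega_i)] = F\pi(\omega_i) \pm \pi(\omega_i)F$ keeps it in $\C L^{m/n_i}(H)$. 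Adding up, the total weight of the product is at least
\[
\frac{n_0}{m} + \sum_{i=1}^{k}\frac{n_i + [\,n_i = 0\,]}{m} \;\geq\; \frac{n_0 + n_1 + \cdots + n_k + k}{m} \;=\; \frac{m}{m} \;=\; 1,
\]
where the inequality uses that for each $i \geq 1$, if $n_i = 0$ the numerator contributes $1 = n_i + 1$, and if $n_i \geq 1$ it contributes $n_i \geq n_i$, and in any case at least $n_i$ plus, summed over the $k$ commutator factors, at least $k$ additional units coming from the $k$ factors $[F,\pi(\omega_i)]$ each carrying a ``$+1$'' in its degree (so that $n_i + 1 \geq 1$). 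Hence the product lies in $\C L^1(H)$, i.e. is trace class.

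I expect the main obstacle to be getting this counting argument airtight, in particular handling the degenerate cases correctly: when $k = 0$ the element is just $\omega_0 \in (\wit A_\tau)_m$ mapping to a product of $m$ copies of $T \in \C L^m(H)$, hence trace class by Hölder; when some $n_i = 0$ one must genuinely invoke axiom (3) of Definition~\ref{finfred} for the factor $[F,\pi(\omega_i)] = [F,\pi(a) + \lambda] = [F,\pi(a)] \in \C L^m(H)$ rather than any naive degree count; and one must make sure the quotient relation $d(\tau) + \tau^2 \sim 0$ defining $\Omega_\tau$ does not spoil things — it does not, since $\pi$ already descends through this relation and the claim is about the image, so it suffices to check representatives in $\Omega(\wit A_\tau)$. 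The passage from ``each simple-tensor summand is trace class'' to ``$\pi(\omega)$ is trace class for all $\omega \in (\Omega_\tau)_m$'' is then immediate by linearity, since $\C L^1(H)$ is a linear subspace. Finally I would remark that continuity of $\omega \mapsto \pi(\omega)$ into $\C L^1(H)$, in the topological case, follows from the same Hölder estimates together with continuity of $\pi$ and of $a \mapsto [F,\pi(a)]$ into $\C L^m(H)$, though that is not needed for the bare statement of the lemma.
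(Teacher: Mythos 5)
Your overall strategy (generalized H\"older for Schatten ideals plus a degree count on words in $\wit A$ and $\tau$) is the same as the paper's, and your handling of the $n_i=0$ factors, the $k=0$ case, linearity, and the quotient relation is fine. But there is a genuine gap at the central step: you need each commutator factor $[F,\pi(\omega_i)]$ with $n_i=|\omega_i|\geq 1$ to contribute Schatten weight $(n_i+1)/m$, yet the only bound you actually establish is $[F,\pi(\omega_i)]\in \C L^{m/n_i}(H)$, i.e.\ weight $n_i/m$. The ``small refinement'' you allude to is never supplied, the remark $n_i/m \geq (n_i+1-1)/m$ is an empty tautology, and your displayed inequality
\[
\sum_{i=1}^{k}\frac{n_i + [\,n_i=0\,]}{m} \;\geq\; \frac{n_1+\cdots+n_k+k}{m}
\]
is false whenever some $n_i\geq 1$, since then $n_i+[\,n_i=0\,]=n_i<n_i+1$. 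The degree bookkeeping ``$+k$'' in $|I|=n_0+\cdots+n_k+k=m$ does not translate for free into $k$ extra units of Schatten weight; that is precisely what has to be proved.

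The missing idea is Lemma~\ref{fundid}, $FT+TF+T^2=0$, which you never invoke. It is what upgrades the graded commutator: writing a homogeneous $\omega_i$ as a sum of words $x_0\tau^{j_0}\cdots x_r\tau^{j_r}$ and using that $[F,\cdot\,]$ is a graded derivation, one checks that $[F,\wit\pi(x)T^{j}]$ equals $[F,\wit\pi(x)]T^{j}$ for $j$ even and $[F,\wit\pi(x)]T^{j}-\wit\pi(x)T^{j+1}$ (up to sign) for $j$ odd, because the anticommutator $FT+TF$ produces an extra factor of $T$ rather than a merely bounded operator. This is exactly how the paper gets $[F,\pi(\omega_i)]\in\C L^{m/(n_i+1)}(H)$, after which your weight count closes. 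Without this identity the lemma fails at the level of naive estimates: for instance $d(\tau)=1\otimes\tau\in(\Omega(\wit A_\tau))_2$ has $\pi(d\tau)=FT+TF$, which your estimates only place in $\C L^m(H)$, whereas the identity shows it equals $-T^2\in\C L^{m/2}(H)$, as required when $m=2$.
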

\begin{proof}
We recall that the Schatten ideals satisfy the multiplication rule
\[
\C L^{m/p}(H) \cdot \C L^{m/q}(H) \subseteq \C L^{m/(p+q)}(H)
\]
for any $p,q \in \{1,\ldots,m\}$ with $p+q \in \{1,\ldots,m\}$.

Suppose that $\sigma \in \wit A_\tau$ is a homogeneous element of order $i \in
\{0,\ldots,m\}$ of the form
\[
\sigma = x_0 \tau^{i_0} \clc x_k \tau^{i_k}
\]
for some $x_0, \ldots, x_k \in \wit A$ and $i_0 \plp i_k = i$. Since $T \in \C
L^m(H)$ by assumption we get that
\[
\pi(\sigma) 
= \wit \pi(x_0) T^{i_0} \clc \wit \pi(x_k) T^{i_k} \in \C
L^{m / i}(H)
\]
Here $\wit \pi(a,\lambda) = \pi(a) + \lambda$ for any $(a,\lambda) \in \wit
A$.

Next, assume that the order of $\sigma$ is less than or equal to
$(m-1)$. Let $j \in \{0,\ldots,k\}$. Assume that $i_j = 2l + 1$ is odd. We
evolve on the graded commutator
\[
\begin{split}
[F, \wit \pi(x_j) T^{\, i_j}]
& = [F, \wit \pi(x_j)] T^{\, i_j} + \wit \pi(x_j) (FT + TF) T^{2l} \\
& = [F, \wit \pi(x_j)] T^{\, i_j} - \wit \pi(x_j) T^{2l + 2}
\end{split}
\]
Here we have used the identity $FT + TF = T^2$ of Lemma \ref{fundid}. Assume
that $i_j = 2l$ is even. The graded commutator is then given by
\[
[F, \wit \pi(x_j) T^{\, i_j}] = [F, \wit \pi(x_j)] T^{\, i_j}
\]
In both cases we get that $[F, \wit \pi(x_j) T^{\, i_j}] \in \C L^{m/(i_j
  +1)}(H)$. Since the graded commutator $[F, \cdot \,]$ is a graded
derivation we conclude that $[F, \pi(\sigma)] \in \C L^{m/(i + 1)}(H)$.

The result of the Lemma is a consequence of these two observations and the
multiplication rule for Schatten ideals.
\end{proof}

We are now ready to define the generalized chain $\mathbf{\Omega_{\C T}}$ of
dimension $m$ over $\wit A$. It is given by the following data:
\begin{enumerate}
\item The unital graded algebras
\[
\arr{ccc}{
\Omega^*([0,1]) \grad \Omega_\tau & \T{and} &
\partial(\Omega_\tau) = \Omega_\tau \oplus \Omega_\tau
}
\]
Here $\Omega^*([0,1])$ denotes the unital graded algebra of differential forms
on the unit interval and $\grad$ denotes the graded tensor product over $\cc$
of graded algebras.

The unital surjective homomorphism of graded algebras
\[
(r_1,r_0) : \Omega^*([0,1]) \grad \Omega_\tau \to \Omega_\tau \oplus \Omega_\tau
\]
given by the restriction to $1$ and $0$ respectively.

The unital algebra homomorphism
\[
\arr{ccc}{
\rho : x \mapsto 1 \grad x \in C^\infty([0,1]) \grad \wit A
& \q & x \in \wit A
}
\]
\item The graded derivations of degree one
\[
\arr{ccc}{
\nabla_\tau = d \grad 1 + 1 \grad d
+ t \grad [\tau, \cdot\,]
& \T{and} & 
\partial \nabla_\tau = (d + [\tau, \cdot \,]) \oplus d
}
\]
with curvatures
\[
\arr{ccc}{
\theta_\tau = dt \grad \tau + t \grad d(\tau)
+ t^2 \grad \tau^2 \in (\Omega_\tau)_2
& \T{and} &
\partial\theta = (d(\tau) + (\tau)^2,0) = (0,0)
}
\]
Here the connection on $\Omega^*([0,1]) \grad \Omega_\tau$ should be
understood in the following sense
\[
\nabla_\tau(\alpha \grad \omega)
= d\alpha \grad \omega + (-1)^{|\alpha|} \alpha \grad d\omega 
+ (-1)^{|\alpha|} t \alpha \grad ( \tau \omega + (-1)^{|\omega| + 1} \omega
\tau ) 
\]   
for homogeneous elements $\alpha \in \Omega^*([0,1])$ and $\omega \in
\Omega_\tau$.
\item The graded trace $\int_\tau : (\Omega_\tau)_m \to \cc$ is given by
\[
\int_\tau(\alpha \grad \omega) = \fork{ccc}{
\frac{1}{2}\big(\int_0^1 \alpha \big)\cdot \T{Tr}(\gamma^m F\pi(d\omega)) 
& \T{for} & \alpha \in \Omega^1([0,1]) \\
0 & \T{for} & \alpha \in \Omega^0([0,1])  
}
\]
\end{enumerate}

We leave it to the reader to verify that $\F{\Omega_{\C T}}$ satisfies the
generalized chain conditions given in Section \ref{genchaincobord}. See also
\cite[IV $1.\alpha$ Proposition $1$]{connesII}, \cite[Section $4$]{gorok} and
\cite[Theorem $2.11$]{nistor}. 

Let $\F{\wih \Omega_{\C G}}$ and $\F{\wih \Omega_{\C F}}$ be the cycles of
dimension $(m-1)$ over $\wit A$ given by
\[
\arr{ccc}{
\F{\wih \Omega_{\C G}} := (\Omega_\tau, d + [\tau,\cdot\,], \int)
& \q & 
\F{\wih \Omega_{\C F}} := (\Omega_\tau, d, \int)
}
\]
Here $\int : (\Omega_\tau)_{m-1} \to \cc$ is the graded trace defined by
\[
\int \omega = \frac{1}{2}\T{Tr}(\gamma^m F \pi(d \omega))
\]
The unital algebra homomorphism $\rho : \wit A \to (\Omega_\tau)_0 = \wit A$
is the identity homomorphism. The boundary of the generalized chain
$\F{\Omega_{\C T}}$ then equals the direct sum of cycles
\[
\partial{\F{\Omega_{\C T}}} 
= \F{\wih \Omega_{\C G}} \oplus \F{\underline{\wih \Omega}_{\C F}}
\]
Here $\F{\underline{\wih \Omega}_{\C F}}$ is obtained from $\F{\wih \Omega_{\C
    F}}$ by changing the sign of the graded trace. This means that
$\F{\Omega_{\C T}}$ defines a cobordism between $\F{\wih \Omega_{\C F}}$ and
$\F{\wih \Omega_{\C G}}$. In particular we have the
cobordism relation
\begin{equation}\label{eq:scobordI}
(b+B)(\T{Ch}(\F{\Omega_{\C T}})) 
= S(\T{Ch}(\F{\wih \Omega_{\C G}}))
- S(\T{Ch}(\F{\wih \Omega_{\C F}}))
\end{equation}
at the level of cyclic cochains in $\C B^{m+1}(\wit A)$, see Theorem
\ref{cobordant}.

As mentioned earlier, the Chern characters of the cycles $\F{\wih \Omega_{\C
    G}}$ and $\F{\wih \Omega_{\C F}}$ coincides with the index cocycles of the
Fredholm modules $\C G$ and $\C F$ up to a constant.

\begin{lemma}\label{coinchernwed}
We have the identities
\[
\arr{ccc}{
(m-1)!\, \E{Ch}(\F{\wih{\Omega}_{\C F}}) = \tau_{\C F}
& \E{and} &
(m-1)!\, \E{Ch}(\F{\wih{\Omega}_{\C G}}) = \tau_{\C G}
}
\]
at the level of cyclic cocycles in $\C B^{m-1}(\wit A)$.
\end{lemma}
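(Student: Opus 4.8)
The plan is to compute the Chern character of each cycle directly from Gorokhovsky's JLO-type formula and recognize the result as the index cocycle. Since the cycles $\F{\wih\Omega_{\C F}}$ and $\F{\wih\Omega_{\C G}}$ are both $(m-1)$-dimensional \emph{cycles} (curvature $\theta = 0$), the Chern character formula collapses: all the terms involving positive powers of $\theta$ vanish, so only the top component $\T{Ch}(\F{\wih\Omega})^{m-1}$ survives, and it reads
\[
\T{Ch}(\F{\wih\Omega})^{m-1}(a_0,\ldots,a_{m-1})
= \frac{1}{(m-1)!}\int \rho(a_0)\,\nabla\rho(a_1)\clc\nabla\rho(a_{m-1}).
\]
For $\F{\wih\Omega_{\C F}}$ the connection is $\nabla = d$, the inclusion $\rho : \wit A \to (\Omega_\tau)_0 = \wit A$ is the identity, and the graded trace is $\int\omega = \tfrac12\T{Tr}(\gamma^m F\pi(d\omega))$. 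So I would first expand $\rho(a_0)\,d(a_1)\clc d(a_{m-1}) = a_0 \otimes a_1 \olo a_{m-1} \in (\Omega_\tau)_{m-1} \subseteq \Omega(\wit A_\tau)$, apply $d$ once more to land in degree $m$, and then feed this through the algebra homomorphism $\pi : \Omega(\wit A_\tau) \to \C L(H)$, which sends $b_0 \otimes b_1 \olo b_m$ to $\pi(b_0)[F,\pi(b_1)]\clc[F,\pi(b_m)]$. Since the $a_i \in \wit A$ have degree $0$, all commutators are ungraded, $d(a_0 \olo a_{m-1}) = 1 \otimes a_0 \olo a_{m-1}$, and $\pi$ of this is $[F,\pi(a_0)]\clc[F,\pi(a_{m-1})]$. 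Thus $\int(a_0 \otimes d(a_1)\clc d(a_{m-1})) = \tfrac12\T{Tr}(\gamma^m F[F,\pi(a_0)]\clc[F,\pi(a_{m-1})]) = \tau_{\C F}(a_0,\ldots,a_{m-1})$, which gives exactly $(m-1)!\,\T{Ch}(\F{\wih\Omega_{\C F}}) = \tau_{\C F}$.

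For $\F{\wih\Omega_{\C G}}$ the only change is that the connection is $\nabla = d + [\tau,\cdot\,]$, so I need to check that replacing $d$ by $d + [\tau,\cdot\,]$ in the formula, after passing through $\pi$, precisely turns each $[F,\pi(a_i)]$ into $[G,\pi(a_i)]$ — indeed $\pi(\tau) = T = G - F$, and $[F,\pi(a_i)] + [T,\pi(a_i)] = [G,\pi(a_i)]$ because the $\pi(a_i)$ commute with nothing in particular but the commutator is additive. The subtlety is the extra $\nabla$ applied at the end to produce a degree-$m$ element inside the trace: one must verify that the definition of $\int$ on the cycle (which uses $F$, not $G$) together with the curvature-zero relation $d(\tau) + \tau^2 \sim 0$ in $\Omega_\tau$ conspires so that $\tfrac12\T{Tr}(\gamma^m F\pi(\nabla\omega))$ with $\omega = a_0 \otimes_{\nabla} a_1 \clc_{\nabla} a_{m-1}$ reproduces $\tfrac12\T{Tr}(\gamma^m G[G,\pi(a_0)]\clc[G,\pi(a_{m-1})])$; this is where Lemma \ref{fundid} ($FT + TF + T^2 = 0$) enters, exactly as it did when showing $\pi$ descends to $\Omega_\tau$. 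The main obstacle is therefore bookkeeping: tracking how the trace functional $\int$, defined via $F$ and $d$, interacts with the $\tau$-shifted connection so that the final expression reorganizes into the $G$-index cocycle; everything else is the straightforward collapse of the Gorokhovsky formula in the curvature-free case. I would close by remarking that trace-class-ness of all operators appearing is guaranteed by the previous Lemma, so every manipulation under $\T{Tr}$ is legitimate.
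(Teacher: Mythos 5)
Your overall strategy is the same as the paper's: both cycles are flat, so the JLO-type formula collapses to its top component, and one identifies $\tfrac{1}{(m-1)!}\int \rho(a_0)\nabla\rho(a_1)\clc\nabla\rho(a_{m-1})$ with the index cocycle after pushing everything through $\pi$. Your computation for $\F{\wih{\Omega}_{\C F}}$ is complete and correct (the paper simply calls this case clear). For $\F{\wih{\Omega}_{\C G}}$, however, the step you defer as ``bookkeeping'' is in fact the entire content of the paper's proof, and you have not carried it out. Concretely: since $\pi(\nabla(a_i)) = [F,\pi(a_i)] + [T,\pi(a_i)] = [G,\pi(a_i)]$, the top component evaluates to $\tfrac{1}{(m-1)!}\cdot\tfrac12 \T{Tr}\big(\gamma^m F\,\pi(d\omega)\big)$ with $\pi(\omega) = x_0[G,x_1]\clc[G,x_{m-1}]$, and the identity that still has to be established is
\[
\T{Tr}\big(\gamma^m x_0[G,x_1]\clc[G,x_{m-1}] + (-1)^m\gamma^m F x_0[G,x_1]\clc[G,x_{m-1}]F\big)
= \T{Tr}\big(\gamma^m G[G,x_0]\clc[G,x_{m-1}]\big).
\]
The paper proves this by writing $F = G - T$ in the conjugated term, expanding, and cancelling the cross terms using $FT+TF+T^2=0$ together with the trace property; this is a short but genuine trace manipulation, and it is not the same use of Lemma \ref{fundid} as in showing that $\pi$ descends to $\Omega_\tau$ (there one only needs $\pi(d\tau+\tau^2)=0$). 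You should also correct a slip in your setup: the graded trace on $\F{\wih{\Omega}_{\C G}}$ is $\int\omega = \tfrac12\T{Tr}(\gamma^m F\pi(d\omega))$, built from the differential $d$ and the operator $F$, not from the connection $\nabla = d + [\tau,\cdot\,]$ as you write at one point; taking $\pi(\nabla\omega)$ inside the trace would evaluate a different functional and the left-hand side above would come out wrong.
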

\begin{proof}
The first identity is clear. The proof of the second identity amounts to the
following calculation
\[
\begin{split}
& \T{Tr}\big(\gamma^m x_0[G,x_1] \ldots [G,x_{m-1}]
+ (-1)^m \gamma^m Fx_0[G,x_1]\ldots [G,x_{m-1}]F\big) \\
& \q = \T{Tr}\big(\gamma^m x_0[G,x_1] \ldots [G,x_{m-1}] 
+ (-1)^m\gamma^mGx_0[G,x_1]\ldots [G,x_{m-1}]G\big) \\
& \qq - (-1)^m\T{Tr}\big( \gamma^m Tx_0[G,x_1]\ldots [G,x_{m-1}](F+T) \big) \\
& \qq - (-1)^m\T{Tr}( \gamma^m Fx_0[G,x_1]\ldots [G,x_{m-1}]T) \\
& \q = \T{Tr}(\gamma^m G[G,x_0]\clc [G,x_{m-1}]) \\
\end{split}
\]
which is valid for all $x_0,\ldots,x_{m-1} \in \wit A$. Here we have used that
the operator $FT + TF +T^2$ vanishes, see Lemma \ref{fundid}.
\end{proof}

The result of Lemma \ref{coinchernwed} improves the equality of
\eqref{eq:scobordI}, which now reads
\begin{equation}\label{eq:scobordII}
(b+B)(\T{Ch}(\F{\Omega_{\C T}})) 
= \frac{1}{(m-1)!}S(\tau_{\C G} - \tau_{\C F})
\end{equation}
The explicit description of the involved cyclic cochains allows us to carry
out a further improvement of the above relation. In this respect, we should
refer to the proof of \cite[Lemma $2.7$]{gorok} where the main idea of the
next Lemma appears.

\begin{lemma}\label{diffunI}
The difference of cyclic cocycles
\[
\tau_{\C G} - \tau_{\C F}
\in \C B^{m-1}(\wit A)
\]
is a cyclic coboundary.
\end{lemma}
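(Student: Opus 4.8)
The plan is to exploit the explicit JLO-type formula for $\T{Ch}(\F{\Omega_{\C T}})$ together with the special structure of the generalized chain $\F{\Omega_{\C T}}$ --- in particular the fact that its underlying algebra is $\Omega^*([0,1]) \grad \Omega_\tau$ and that its graded trace $\int_\tau$ vanishes on the $\Omega^0([0,1])$-part. The key observation is that the top component $\T{Ch}(\F{\Omega_{\C T}})^{m+1} \in C^{m+1}(\wit A)$ must vanish. Indeed, in the defining formula for $\T{Ch}(\F{\Omega_{\C T}})^{m+1-2k}$ with $k=0$ there is no curvature factor $\theta_\tau$, so the integrand is $\int_\tau \rho(a_0)\nabla_\tau(\rho(a_1))\clc\nabla_\tau(\rho(a_{m+1}))$; since $\rho(a) = 1 \grad a$ and $\nabla_\tau(1 \grad a) = 1 \grad d(a)$ (the $dt$ and the $[\tau,\cdot\,]$ contributions vanish on $\wit A \subseteq (\Omega_\tau)_0$), every term lands in $\Omega^0([0,1]) \grad (\Omega_\tau)_{m+1}$, on which $\int_\tau$ is zero by definition. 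So $\T{Ch}(\F{\Omega_{\C T}}) = (0, \varphi^{m-1}, \varphi^{m-3},\ldots)$ for some $\varphi^{m-1} \in C^{m-1}(\wit A)$, i.e.\ $\T{Ch}(\F{\Omega_{\C T}}) = S(\psi)$ for the cyclic cochain $\psi = (\varphi^{m-1}, \varphi^{m-3}, \ldots) \in \C B^{m-1}(\wit A)$.

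Next I feed this into the cobordism identity \eqref{eq:scobordII}, which reads $(b+B)(\T{Ch}(\F{\Omega_{\C T}})) = \frac{1}{(m-1)!}S(\tau_{\C G} - \tau_{\C F})$. Writing $\T{Ch}(\F{\Omega_{\C T}}) = S(\psi)$ and using that the periodicity operator $S$ commutes with $b+B$, the left side becomes $S\big((b+B)\psi\big)$. Now $S$ is the injective shift on the totalized $(b,B)$-complex --- it is literally an inclusion of the truncated complex --- so from $S\big((b+B)\psi\big) = \frac{1}{(m-1)!}S(\tau_{\C G} - \tau_{\C F})$ one may cancel $S$ and conclude
\[
(b+B)\psi = \tfrac{1}{(m-1)!}(\tau_{\C G} - \tau_{\C F}) \in \C B^{m-1}(\wit A).
\]
Since $\tau_{\C G} - \tau_{\C F} = (\tau_{\C G} - \tau_{\C F}, 0, \ldots)$ is supported in top degree, this exhibits it, up to the constant $(m-1)!$, as $(b+B)$ applied to the cyclic $(m-2)$-cochain $(m-1)!\,\psi$; hence $\tau_{\C G} - \tau_{\C F}$ is a cyclic coboundary in $\C B^{m-1}(\wit A)$.

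The main obstacle --- and the step requiring genuine care rather than bookkeeping --- is the vanishing of the top component $\T{Ch}(\F{\Omega_{\C T}})^{m+1}$. One must check that no term in the alternating expansion of $\nabla_\tau(\rho(a_1)) \clc \nabla_\tau(\rho(a_{m+1}))$, after multiplying by $\rho(a_0)$, picks up a $dt$-factor; this is exactly where it matters that $\rho$ lands in the trivially-differentiated slot $1 \grad \wit A$ and that $\nabla_\tau$ produces a $dt\,\grad(\cdot)$ term only via the $d \grad 1$ summand acting on a nonconstant function of $t$, which never occurs here. A secondary point to verify is that the cancellation of $S$ is legitimate at the cochain (not merely cohomology) level, which is immediate from the description $S(\varphi^m,\varphi^{m-2},\ldots) = (0,\varphi^m,\varphi^{m-2},\ldots)$: $S$ is injective, so cancellation is valid. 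With these two points in hand the rest is formal, and I note that the argument parallels the proof of \cite[Lemma $2.7$]{gorok}.
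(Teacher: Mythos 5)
Your argument is correct and is essentially the paper's own proof: the paper likewise observes that the top component of $\T{Ch}(\F{\Omega_{\C T}})$ vanishes because $(1 \grad x_0)\nabla_\tau(1 \grad x_1)\ldots\nabla_\tau(1 \grad x_m)$ lies in $\Omega^0([0,1]) \grad (\Omega_\tau)_m$, on which $\int_\tau$ is zero, and then reads the coboundary off the cobordism identity \eqref{eq:scobordII} (by evaluating on chains rather than by formally cancelling the injective shift $S$, but this is the same computation). Two small slips to correct: the generalized chain $\F{\Omega_{\C T}}$ has dimension $m$, so the top component is $\T{Ch}(\F{\Omega_{\C T}})^m$ (not $\T{Ch}(\F{\Omega_{\C T}})^{m+1}$) and your $\psi$ lives in $\C B^{m-2}(\wit A)$ (as your final sentence in fact states); and the contribution $t \grad [\tau,a]$ to $\nabla_\tau(1 \grad a)$ does \emph{not} vanish for $a \in \wit A$ --- what matters is only that it carries no $dt$-factor, which is exactly the correct reason you give in your closing paragraph.
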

\begin{proof}
Let $(y_{m+1},y_{m-1},\ldots ) \in \C B_{m+1}(\wit A)$. From the identity
\eqref{eq:scobordII} we get that
\[
\begin{split}
& \tau_{\C G}(y_{m-1},y_{m-3},\ldots) - \tau_{\C F}(y_{m-1},y_{m-3},\ldots) \\
& \q = (m-1)! \, \T{Ch}(\F{\Omega_{\C T}})(by_{m+1} + By_{m-1}, by_{m-1} +
By_{m-3},\ldots)
\end{split}
\]
Now, let $\T{Ch}(\F{\Omega_{\C T}})^m$ denote the top component of
$\T{Ch}(\F{\Omega_{\C T}})$. We then have
\[
\T{Ch}(\F{\Omega_{\C T}})^m(x_0,\ldots,x_m)
= \frac{1}{m!}\int_\tau (1 \grad x_0) \nabla_\tau(1 \grad x_1)
\ldots \nabla_\tau(1 \grad x_m)
\]
Here $x_0,\ldots,x_m \in \wit A$ are any elements in the
unitalization. However by definition of the graded trace
\[
\int_\tau : (\Omega^*([0,1]) \grad \Omega_\tau)_m \to \cc
\]
the above quantity is zero, since
\[
(1 \grad x_0) \nabla_\tau(1 \grad x_1) \ldots \nabla_\tau(1 \grad x_m) \in
\Omega^0([0,1]) \grad (\Omega_\tau)_m
\]
It follows that 
\[
\begin{split}
& \tau_{\C G}(y_{m-1},y_{m-3},\ldots) - \tau_{\C F}(y_{m-1},y_{m-3},\ldots) \\
& \q = (m-1)!\, \T{Ch}(\F{\Omega_{\C T}})^{m-2}(by_{m-1}+By_{m-3})
+ (m-1)!\, \T{Ch}(\F{\Omega_{\C T}})^{m-4}(by_{m-3}+By_{m-5})
+ \ldots 
\end{split}
\]
But this proves the lemma.
\end{proof}

We are now in position to prove the invariance result which lies at the core
of the present paper.

\begin{prop}\label{cocyequivI}
Let $\C F = (\pi,H,F)$ and $\C G = (\pi,H,G)$ be two $m$-summable Fredholm
modules over a $\cc$-algebra $A$. Suppose that $\C F$ and $\C G$ are
$m$-summable perturbations of each other. Then their Chern-Connes characters
\[
\E{Ch}(\C F)
= \E{Ch}(\C G) \in HC^{m-1}(A)
\]
agree in the cyclic cohomology group of degree $(m-1)$.
\end{prop}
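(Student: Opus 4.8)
The plan is to deduce Theorem \ref{cocyequivI} from the work already done over the unitalization $\wit A$, namely Lemma \ref{coinchernwed} and Lemma \ref{diffunI}, by pushing everything through the restriction cochain homomorphism $i^* : \C B^*(\wit A) \to \C B^*(\cc)$ that defines $HC^*(A) = H^*(\T{Ker}(i^*))$. First I would observe that the index cocycles $\tau_{\C F}, \tau_{\C G} \in \C B^{m-1}(\wit A)$ in fact lie in $\T{Ker}(i^*) = \C B^{m-1}(A)$, so that the difference $\tau_{\C G} - \tau_{\C F}$ is a legitimate cocycle in the complex computing $HC^{m-1}(A)$. By Lemma \ref{diffunI} there is some $\psi \in \C B^{m-2}(\wit A)$ with $(b+B)\psi = \tau_{\C G} - \tau_{\C F}$; the point is to promote $\psi$ to an element of $\T{Ker}(i^*)$.

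The key step is the following standard argument about the kernel complex. Since $i : \cc \to \wit A$ is a unital algebra homomorphism split by the canonical projection $\wit A \to \cc$, the cochain map $i^* : \C B^*(\wit A) \to \C B^*(\cc)$ is split surjective, with a splitting $j^* : \C B^*(\cc) \to \C B^*(\wit A)$ induced by the projection; thus $\C B^*(\wit A) \cong \C B^*(A) \oplus \C B^*(\cc)$ as complexes. Now $i^*(\tau_{\C G} - \tau_{\C F}) = 0$, so writing $\psi = \psi_0 + j^*(i^*\psi)$ with $\psi_0 := \psi - j^*(i^*\psi) \in \T{Ker}(i^*) = \C B^{m-2}(A)$, and using that $j^*$ is a cochain map, we get $(b+B)\psi_0 = (b+B)\psi - j^*\big((b+B)i^*\psi\big) = (\tau_{\C G} - \tau_{\C F}) - j^*\big(i^*(\tau_{\C G}-\tau_{\C F})\big) = \tau_{\C G} - \tau_{\C F}$. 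Hence $\tau_{\C G} - \tau_{\C F}$ is a coboundary already inside $\C B^{m-1}(A)$, which is exactly the statement that $[\tau_{\C F}] = [\tau_{\C G}]$ in $HC^{m-1}(A)$. Finally I would recall that $\T{Ch}(\C F) = [\tau_{\C F}]$ and $\T{Ch}(\C G) = [\tau_{\C G}]$ by definition, so the two Chern-Connes characters agree, completing the proof.

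I would also note that, in the odd case, the convention $\gamma = \T{Id}$ makes the formulas above literally the ones used throughout Section \ref{cobordpert}, so no separate argument is needed; the even and odd cases were already unified at the level of the generalized chain $\F{\Omega_{\C T}}$, the requirement that the grading operators of $\C F$ and $\C G$ coincide (part of Definition \ref{pertfredI}) being precisely what is needed for the construction of $\F{\Omega_{\C T}}$ and the computation in Lemma \ref{coinchernwed} to go through with a single $\gamma$. The main obstacle is really just the bookkeeping with the kernel complex $\C B^*(A) = \T{Ker}(i^*)$: one must be careful that the cochain $\psi$ produced by Lemma \ref{diffunI} need not itself be unital-normalized, and that the correction term $j^*(i^*\psi)$ does not disturb the top-degree vanishing — but since we only care about the class in $HC^{m-1}(A)$ and the splitting $j^*$ is an honest cochain map, this causes no trouble.
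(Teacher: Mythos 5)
Your argument is correct and is essentially the paper's own proof: the paper likewise deduces the theorem from Lemma \ref{diffunI} by correcting the primitive so that it lies in $\T{Ker}(i^*)$, treating the odd case (where no correction is needed, since every component of the truncated Chern character has degree $\geq 1$ and so vanishes on scalars) separately from the even case (where it subtracts the term $\T{Ch}(\Omega_{\C T})^0 \circ p$). Your uniform correction $\psi - p^*(i^*\psi)$, using the splitting of $i^*$ induced by the projection $p : \wit A \to \cc$, is exactly this construction packaged without the even/odd case distinction.
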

\begin{proof}
Let $i^* : \C B^*(\wit A) \to \C B^*(\cc)$ denote the cochain homomorphism
induced by the inclusion $i : \cc \to \wit A$.

Suppose that $m = 2k + 1$ is odd. By Lemma \ref{diffunI} we have the identity
\[
(b+B)\big(\T{Ch}(\Omega_{\C T})^{2k-1},\ldots,\T{Ch}(\Omega_{\C T})^1\big)
= \frac{1}{(m-1)!}\tau_{\C G} - \frac{1}{(m-1)!}\tau_{\C F}
\]
in $\C B^{2k}(\wit A)$. The desired result follows by noting that all the
involved cyclic cochains lie in the kernel of the restriction $i^*$.

Let $p : \wit A \to \cc$ denote the unital algebra homomorphism given by
$p(a,\lambda) = \lambda$.

Suppose that $m = 2k$ is even. Let $\psi \in \C B^{m-2}(\wit A)$ denote the
cyclic cochain given by
\[
\psi(y_{2k-2},\ldots,y_0)
= \big(\T{Ch}(\Omega_{\C T})^{2k-2},\ldots,\T{Ch}(\Omega_{\C   T})^0\big)
(y_{2k-2},\ldots,y_0)
- \T{Ch}(\Omega_{\C T})^0\big(p(y_0)\big)
\]
We then have that $i^*(\psi) = 0$. Furthermore, the coboundary of $\psi$ is
the desired difference of Chern characters
\[
(b + B)(\psi) 
= (b+B)\big(\T{Ch}(\Omega_{\C T})^{2k-2},\ldots,\T{Ch}(\Omega_{\C T})^0\big)
= \frac{1}{(m-1)!}\tau_{\C G} - \frac{1}{(m-1)!}\tau_{\C F}
\]
This proves the theorem in the even case as well.
\end{proof}

On some occasions in the sequel we will also need a continuous version of
Theorem \ref{cocyequivI}.

\begin{prop}\label{cocyequivcontI}
Let $\C F = (\pi, H, F)$ and $\C G = (\pi, H, G)$ be two continuous
$m$-summable Fredholm modules over $A$. Assume that $\C F$ and $\C G$ are
$m$-summable perturbations of each other. Then their continuous Chern-Connes
characters
\[
\E{Ch}^{\E{cont}}(\C F)
= \E{Ch}^{\E{cont}}(\C G) \in
HC^{m-1}_{\E{cont}}(A)
\]
agree in the continuous cyclic cohomology group of dimension $(m-1)$.
\end{prop}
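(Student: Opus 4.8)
The plan is to show that the entire argument establishing Theorem \ref{cocyequivI} goes through verbatim in the continuous category, once we check that every cyclic cochain appearing in the construction is continuous. First I would observe that the generalized chain $\F{\Omega_{\C T}}$ was built out of the data $(\pi, H, F)$, $(\pi, H, G)$ and the difference $T = G - F \in \C L^m(H)$ purely through the algebra homomorphism $\pi : \Omega_\tau \to \C L(H)$ and the trace $\int_\tau$. When $\C F$ and $\C G$ are \emph{continuous} $m$-summable Fredholm modules, the maps $a \mapsto \pi(a) \in \C L(H)$ and $a \mapsto [F, \pi(a)] \in \C L^m(H)$ are continuous by Definition \ref{finfred}; since $T = G - F$ is a fixed operator in $\C L^m(H)$, the extended homomorphism $\pi : \wit A_\tau \to \C L(H)$ sends $\tau$ to the fixed operator $T$, and the induced map on each homogeneous piece $(\Omega_\tau)_n$ of fixed multidegree is multilinear and jointly continuous into the appropriate Schatten ideal, by the same Hölder-type estimates used in the proof of the Lemma preceding the construction. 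Hence $\T{Ch}(\F{\Omega_{\C T}})^{m-2k}$ is, for each $k$, a continuous Hochschild cochain: it is a finite sum of multilinear maps each of which factors through continuous maps $a_i \mapsto \nabla_\tau(\rho(a_i))$ followed by the bounded trace functional $\int_\tau$, which is continuous because it is (a multiple of) $\T{Tr}$ composed with the continuous map into $\C L^1(H)$.

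Granting this, I would then replay the proofs of Lemmas \ref{coinchernwed} and \ref{diffunI} and of Theorem \ref{cocyequivI} with the words "cyclic cochain" replaced by "continuous cyclic cochain" throughout. Concretely: Lemma \ref{coinchernwed} identifies $(m-1)!\,\T{Ch}(\F{\wih\Omega_{\C F}})$ and $(m-1)!\,\T{Ch}(\F{\wih\Omega_{\C G}})$ with the index cocycles $\tau_{\C F}$, $\tau_{\C G}$, which are continuous cocycles precisely because $\C F$, $\C G$ are continuous; this yields the continuous analogue of \eqref{eq:scobordII}. Lemma \ref{diffunI} only uses the vanishing of the top component $\T{Ch}(\F{\Omega_{\C T}})^m$ (a purely algebraic fact about the graded trace, independent of continuity) together with the identity \eqref{eq:scobordII}, so its continuous version gives that $\tau_{\C G}^{\T{cont}} - \tau_{\C F}^{\T{cont}}$ is the $(b+B)$-coboundary, inside $C^*_{\T{cont}}(\wit A)$, of the explicit continuous cochain $\big(\T{Ch}(\F{\Omega_{\C T}})^{m-2},\ldots\big)$. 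Finally, in the proof of Theorem \ref{cocyequivI} one passes from $\wit A$ to $A$ by restricting along $i^* : \C B^*(\wit A) \to \C B^*(\cc)$; the correcting cochain $\psi$ in the even case is built from $\T{Ch}(\F{\Omega_{\C T}})^0$ and the homomorphism $p : \wit A \to \cc$, both of which are continuous, so $\psi \in \C B^{m-2}_{\T{cont}}(\wit A)$ and the same manipulation produces the equality $\T{Ch}^{\T{cont}}(\C F) = \T{Ch}^{\T{cont}}(\C G)$ in $HC^{m-1}_{\T{cont}}(A)$.

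The only genuine obstacle is the continuity bookkeeping in the first paragraph: one must verify that, for a fixed $T \in \C L^m(H)$, the maps $(a_0,\ldots,a_{m-2k}) \mapsto \int_\tau \rho(a_0)\theta_\tau^{i_0} \nabla_\tau(\rho(a_1))\theta_\tau^{i_1} \cdots \nabla_\tau(\rho(a_{m-2k}))\theta_\tau^{i_{m-2k}}$ satisfy a bound of the form $C\, p_i(a_0) \cdots p_i(a_{m-2k})$ for some seminorm $p_i$ of $A$ and some constant $C$. This follows by expanding each $\nabla_\tau$ and each occurrence of $\theta_\tau$ into terms involving $\pi(a_j)$, $[F,\pi(a_j)]$ and fixed powers of $T$, applying the Schatten-Hölder inequality $\|xy\|_{m/(p+q)} \leq \|x\|_{m/p}\|y\|_{m/q}$ to reduce every monomial to trace class with a norm bound that is a product of an $a_j$-independent constant and the continuous seminorms controlling $\|\pi(a_j)\|$ and $\|[F,\pi(a_j)]\|_m$. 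Since each such monomial ends up in $\C L^1(H)$ with the total "$F$-degree" exactly $m$, the trace is finite and bounded as required; there are only finitely many monomials, so the bound is uniform. Once this estimate is in hand, every step above is purely formal and the theorem follows.
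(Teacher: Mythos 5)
Your proposal is correct and follows the same route as the paper: the paper's proof of this theorem is a one-line observation that the cochain $\T{Ch}(\F{\Omega_{\C T}})$ used in the proof of Theorem \ref{cocyequivI} satisfies the relevant continuity properties, which is exactly the bookkeeping you carry out in detail. Your expansion of the Schatten--H\"older estimates and the check that the correcting cochain $\psi$ remains continuous simply supplies the verification the paper leaves to the reader.
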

\begin{proof}
This follows by noting that the cyclic cochain $\T{Ch}(\Omega_{\C T}) \in \C
B^m(\wit A)$, which is used for the proof of Theorem \ref{cocyequivI},
satisfies the relevant continuity properties.
\end{proof}





\section{Finitely summable $K$-homology and pairings with algebraic
  $K$-theory}\label{finsumk}
We will define a variant of analytic $K$-homology which has a more algebraic
flavour. The main idea is simply to replace the compact operators in the
definition of analytic $K$-homology with Schatten ideals. The most "algebraic"
equivalence relations in the $C^*$-algebraic setup are unitary equivalence and
perturbation by compact operators. We will thus replace the usual definitions
with the corresponding finitely summable versions. Our main result is then
that the new $K$-homology type groups which we obtain actually pair with
algebraic $K$-theory by means of the Connes-Karoubi multiplicative character.

The results in this section rely on the invariance under perturbations of the
Chern character which we proved in Section \ref{invarcyc}.

The material is organized as follows:

In subsection \ref{finfredeq} we introduce the finitely summable $K$-homology
together with natural periodicity and comparison homomorphisms. The
definitions also make sense in a continuous setup.

In subsection \ref{addcharpairing} we show that the continuous version of
finitely summable $K$-homology pairs with M. Karoubi's relative
$K$-groups. The pairing is induced by the "additive" character of continuous
finitely summable Fredholm modules.

In subsection \ref{multcharpairing} we show that finitely summable
$K$-homology pairs with algebraic $K$-theory. The pairing is induced by the
Connes-Karoubi multiplicative character of finitely summable Fredholm
modules. We review the construction of the multiplicative character also.

\subsection{Finitely summable Fredholm modules and their equivalence
  relations}\label{finfredeq}
In this subsection we will introduce the finitely summable $K$-homology groups
of an algebra over the complex numbers. These groups are equipped with
periodicity homomorphisms which correspond to the periodicity homomorphisms in
cyclic cohomology. Furthermore, for pre-$C^*$-algebras, there are interesting
comparison homomorphisms from finitely summable $K$-homology to analytic
$K$-homology. To begin with, let us give some relevant definitions concerning
finitely summable Fredholm modules.

Let $A$ be an algebra over $\cc$ and let $m \in \nn$ be a positive
integer. Let $\C F = (\pi,H,F)$ be an $m$-summable Fredholm module over $A$.

We will make the following standard assumption: When $(m-1)$ is even we will
assume that the eigenspaces of the grading operator are infinite
dimensional. When $(m-1)$ is odd we will always assume that the eigenspaces of
the selfadjoint unitary $F$ are infinite dimensional. This mild condition is
necessary for the construction in Subsection \ref{multcharpairing} of the
Connes-Karoubi multiplicative pairing between finitely summable $K$-homology
and algebraic $K$-theory. See also \cite[\S $1$]{conkar}.

\begin{definition}\label{degfredI}
We say that $\C F$ is \emph{degenerate} if all the relations in Definition
\ref{finfred} are exactly satisfied. That is, they should not only be
satisfied modulo the $m^{\T{th}}$ Schatten ideal.
\end{definition}

We let $\C F^{-1}$ denote the $m$-summable Fredholm module over $A$ given by
\begin{equation}\label{eq:invfred}
\C F^{-1} := \fork{ccc}{
(\pi,H,-F) & \T{for} & (m-1) \T{ odd} \\
(\pi,H^{\T{op}},-F) & \T{for} & (m-1) \T{ even}
}
\end{equation}
Here $H^{\T{op}}$ denotes the Hilbert space with grading operator $-\gamma \in
\C L(H)$ whenever $H$ is a $\zz/(2 \zz)$-graded Hilbert space with grading
operator $\gamma \in \C L(H)$. 

We will think of $\C F^{-1}$ as a specific choice of an inverse to $\C
F$. This choice is motivated by the standard choice of an inverse in
$K$-homology, see \cite[Proposition $17.3.3$]{black} and \cite[Proposition
$8.2.10$]{higson} for example. It should also be noted that the Chern-Connes
character of the inverse is given by minus the Chern-Connes character of the
original element.

Let $\C F_1 = (\pi_1,H_1,F_1)$ and $\C F_2 = (\pi_2,H_2,F_2)$ be two
$m$-summable Fredholm modules over $A$.
 
\begin{definition}\label{directsumI}
By the \emph{direct sum} of $\C F_1$ and $\C F_2$ we will understand the
$m$-summable Fredholm module
\[
\C F_1 \oplus \C F_2 = (\pi_1 \oplus \pi_2, H_1 \oplus H_2, F_1 \oplus F_2)
\]
over $A$.

In the case where $(m-1)$ is even, the grading on $H_1 \oplus H_2$ is given by
the direct sum of the grading operators.
\end{definition}

We will now concentrate on the appropriate equivalence relations.

\begin{definition}
We say that $\C F_1$ and $\C F_2$ are \emph{unitarily equivalent} if there
exists a unitary operator $u : H_1 \to H_2$ such that
\[
\arr{ccc}{
\pi_1 = u^* \pi_2 u & \T{and} & F_1 = u^* F_2 u
}
\]
In the case where $(m-1)$ is even we will also require that $\gamma_1 = u^*\gamma_2 u$.
\end{definition}

The notation $\sim_m$ will refer to the equivalence relation on $m$-summable
Fredholm modules generated by unitary equivalence and $m$-summable
perturbations in the sense of Definition \ref{pertfredI}.

\begin{definition}\label{stabym}
We say that $\C F_1$ and $\C F_2$ are \emph{stable $m$-summable perturbations}
of each other when there exist $m$-summable Fredholm modules $\C G_1, \C G_2$
and $\C H$ together with degenerate $m$-summable Fredholm modules $\C D_1$ and
$\C D_2$ such that
\[
\C F_1 \oplus \C G_1 \oplus \C G_1^{-1} \oplus \C D_1 \oplus \C H
\sim_m \C F_2 \oplus \C G_2 \oplus \C G_2^{-1} \oplus \C H
\]
We will denote the equivalence relation of stable $m$-summable perturbations
by $\sim_{mc}$.
\end{definition}

We are now ready to define the finitely summable $K$-homology of a
$\cc$-algebra. This is the main definition of this section.

\begin{definition}
By the \emph{$m$-summable $K$-homology} of $A$ we will understand the abelian
group given by the $m$-summable Fredholm modules over $A$ modulo the
equivalence relation of stable $m$-summable perturbation.

The group operation is given by the direct sum of $m$-summable Fredholm
modules. The $m$-summable $K$-homology of $A$ is denoted by $FK^{m-1}(A)$.
\end{definition}

Let $A$ be a locally convex topological algebra. The above definitions apply
to continuous Fredholm modules over $A$ as well. This gives rise to a
continuous version of finitely summable $K$-homology.

\begin{definition}
By the \emph{continuous $m$-summable $K$-homology} of $A$ we will understand
the abelian group given by the $m$-summable continuous Fredholm modules over
$A$ modulo the equivalence relation of stable $m$-summable perturbation.

The group operation is given by the direct sum of $m$-summable Fredholm
modules. The continuous $m$-summable $K$-homology of $A$ is denoted by
$FK^{m-1}_{\E{cont}}(A)$.
\end{definition}

Let $A$ and $B$ be algebras over $\cc$ and let $\varphi : A \to B$ be an
algebra homomorphism. To each $m$-summable Fredholm module, $\C F =
(\pi,H,F)$, over $A$ we can associate an $m$-summable Fredholm module over
$B$,
\[
FK(\varphi)(\C F) := (\pi \circ \varphi, H, F)
\]
In this way we obtain a group homomorphism $FK(\varphi) : FK^{m-1}(A) \to
FK^{m-1}(B)$ for each $m \in \nn$. This turns finitely summable $K$-homology
into a contravariant functor from the category of algebras over $\cc$ to the
category of abelian groups.

Likewise, the continuous finitely summable $K$-homology becomes a
contravariant functor from the category of locally convex topological algebras
to the category of abelian groups.

Let us describe some interesting forgetful homomorphisms.

Let $\C F$ be an $m$-summable Fredholm module over a $\cc$-algebra $A$. Since
the $m^{\T{th}}$ Schatten ideal is contained in the $(m+2)^{\T{th}}$ Schatten
ideal, $\C F$ is also an $(m+2)$-summable Fredholm module. This forgetful map
induces a natural periodicity homomorphism
\[
S : FK^{m-1}(A) \to FK^{m+1}(A)
\]
The construction applies in the continuous case as well. Thus, for each locally
convex topological algebra, $A$, and each $m \in \nn$ there is a natural
periodicity homomorphism
\[
S : FK^{m-1}_{\T{cont}}(A) \to FK_{\T{cont}}^{m+1}(A)
\]

Let $A$ be a pre-$C^*$-algebra in the sense of \cite[IV $1.\gamma$]{connesII}
and let $\C F = (\pi, H,F)$ be an $m$-summable Fredholm module over $A$. The
algebra homomorphism $\pi : A \to \C L(H)$ extends uniquely to a continuous
algebra homomorphism $\overline{\pi} : \overline{A} \to \C L(H)$. Here
$\overline{A}$ denotes the $C^*$-algebra closure of $A$ and the topology on
$\C L(H)$ is defined by the operator norm. Since the compact operators agrees
with the closure of the $m^{\T{th}}$ Schatten ideal in $\C L(H)$ we get a
Fredholm module $(\overline{\pi},H,F)$ over $\overline{A}$. This operation
induces a comparison homomorphism
\[
\arr{ccc}{
\alpha : FK^{m-1}(A) \to K^j(\overline A) & \q & j \T{ parity as }m-1
}
\]
for each $m \in \nn$. Here $K^*(\overline A)$ denotes the analytic
$K$-homology of the $C^*$-algebra $\overline A$.

\subsection{The additive character and pairings with relative
  $K$-theory}\label{addcharpairing}
In this subsection we will show that continuous finitely summable $K$-homology
pairs with M. Karoubi's relative $K$-theory. It was already noted in
\cite{conkar} that each continuous finitely summable Fredholm module yields an
"additive" character on relative $K$-theory. We use the invariance under
perturbations of the Chern-Connes character of continuous finitely summable
Fredholm modules to establish invariance properties of this additive
character. To begin with, let us show that finitely summable $K$-homology
pairs with cyclic homology.

Let $A$ be an algebra over $\cc$ and let $m \in \nn$ be a positive
integer. Let $\C F$ be an $m$-summable Fredholm module over $A$.

We recall from Section \ref{cobordpert} that $\T{Ch}(\C F) = [\tau_{\C F}]$
denotes the class of the index cocycle in cyclic cohomology.
  
Let $\C G$ be another $m$-summable Fredholm module over $A$ and let $\C D$ be
an $m$-summable degenerate Fredholm module over $A$. Recall from Section
\ref{finfredeq} that $\C F^{-1}$ denotes the inverse Fredholm module of $\C
F$. We have the following well known properties
\begin{align}
& \T{Ch}(\C F \oplus \C G) = \T{Ch}(\C F) + \T{Ch}(\C G) \\
& \T{Ch}(\C D) = 0 \\
& \T{Ch}(\C F) + \T{Ch}(\C F^{-1}) = 0 \\
& \T{Ch}(\C F) = \T{Ch}(\C G) \qqqq \T{whenever} \q \C F \sim_u \C G
\end{align}
for the Chern-Connes characters. Together with the invariance result of
Theorem \ref{cocyequivI} these properties imply that there is a well defined
homomorphism
\[
\arr{ccc}{
\T{Ch} : FK^{m-1}(A) \to HC^{m-1}(A) & \q & \T{Ch}([\C F]) = \T{Ch}(\C F)
}
\]
In particular we get a pairing
\[
\arr{ccc}{
\tau : FK^{m-1}(A) \times HC_{m-1}(A) \to \cc
& \q & ([\C F],[x]) \mapsto \tau_{\C F}([x])
}
\]
between finitely summable $K$-homology and cyclic homology.

Let $A$ be a locally convex topological algebra and let $\C F$ be a continuous
$m$-summable Fredholm module over $A$.

We recall from Section \ref{cobordpert} that $\T{Ch}^{\T{cont}}(\C F) =
[\tau^{\T{cont}}_{\C F}] \in HC^{m-1}_{\T{cont}}(A)$ denotes the class in
continuous cyclic cohomology of the index cocycle. For the same reasons as
above we get a well defined homomorphism
\[
\arr{ccc}{
\T{Ch}^{\T{cont}} : FK^{m-1}_{\T{cont}}(A) \to HC^{m-1}_{\T{cont}}(A)
& \q & \T{Ch}^{\T{cont}}([\C F]) = \T{Ch}^{\T{cont}}(\C F)
}
\]
in the continuous case as well. See Theorem \ref{cocyequivcontI}. In
particular, we have a pairing
\begin{equation}\label{eq:paircychom}
\arr{ccc}{
\tau^{\T{cont}} : FK^{m-1}_{\T{cont}}(A) \times HC_{m-1}^{\T{cont}}(A) \to \cc
& \q & ([\C F],[x]) \mapsto \tau^{\T{cont}}_{\C F}([x])
}
\end{equation}
between continuous finitely summable $K$-homology and continuous cyclic
homology. Here $\tau_{\C F}^{\T{cont}} : HC_{m-1}^{\T{cont}}(A) \to \cc$
denotes the character on continuous cyclic homology induced by the class
$\T{Ch}^{\T{cont}}(\C F) \in HC^{m-1}(A)$ in continuous cyclic cohomology.

Remark that both of the Chern-Connes characters which we have defined above
are well-behaved with respect to the periodicity operators in (continuous)
finitely summable $K$-homology and (continuous) cyclic cohomology. This
follows by \cite[IV $1.\beta$ Proposition $2$]{connesII}.

Let us proceed with some short reminders on relative $K$-theory and the
additive character.

In the book \cite{karoubiII} M. Karoubi constructs a sequence of covariant
functors from the category of Fréchet algebras to the category of abelian
groups. For each positive integer $m \in \nn$ and each Fréchet algebra $A$ the
associated abelian group is denoted by $K_m^{\T{rel}}(A)$ and termed the $m^{\T{th}}$
\emph{relative $K$-group} of $A$. These abelian groups measure the difference
between algebraic and topological $K$-theory.

\begin{prop}\label{longrelI}
For each Fréchet algebra $A$ there is a long exact sequence
\[
\begin{CD}
\ldots @>\partial>> K_m^{\E{rel}}(A) @>\theta>>
K_m(A) @>>> K_m^{\E{top}}(A) @>\partial>> K^{\E{rel}}_{m-1}(A) @>>> \ldots
\end{CD}
\]
Here the notation $K_*(A)$ and $K_*^{\T{top}}(A)$ refers to the algebraic and
topological $K$-theory of the Fréchet algebra, respectively.
\end{prop}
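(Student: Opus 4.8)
The plan is to recall Karoubi's construction of $K_m^{\T{rel}}(A)$ as the homotopy groups of a relative space (or relative spectrum) fitting into a fibration, and then simply read off the long exact sequence of homotopy groups. More precisely, for a Fr\'echet algebra $A$ one has the algebraic $K$-theory space $K^{\T{alg}}(A)$, built from $BGL(A)^+$, and the topological $K$-theory space $K^{\T{top}}(A)$, built from $BGL(A^{\T{top}})^+$ where the general linear groups are given their natural topology; there is a natural continuous map $\iota : K^{\T{alg}}(A) \to K^{\T{top}}(A)$ induced by the identity on $GL(A)$. Karoubi defines the relative $K$-theory space as the homotopy fiber $K^{\T{rel}}(A) := \T{hofib}(\iota)$, and sets $K_m^{\T{rel}}(A) := \pi_m(K^{\T{rel}}(A))$. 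With these definitions the statement is the long exact sequence in homotopy groups associated to the fibration sequence $K^{\T{rel}}(A) \to K^{\T{alg}}(A) \to K^{\T{top}}(A)$, with $\theta$ the map induced by $K^{\T{rel}}(A) \to K^{\T{alg}}(A)$ and $\partial$ the connecting homomorphism.

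The steps I would carry out, in order, are: (1) fix the definitions of $K_m(A)$, $K_m^{\T{top}}(A)$ and $K_m^{\T{rel}}(A)$ following \cite{karoubiII}, being careful that $GL(A^{\T{top}})$ really means the topological group obtained from the Fr\'echet topology on $A$, and that the plus construction is applied so that the spaces are infinite loop spaces (hence the low-degree and basepoint issues are harmless); (2) observe that $\iota$ is a based map of based spaces and form its homotopy fiber, giving a fibration sequence up to homotopy; (3) invoke the long exact sequence of a fibration, $\cdots \to \pi_m(\T{hofib}) \to \pi_m(K^{\T{alg}}(A)) \to \pi_m(K^{\T{top}}(A)) \to \pi_{m-1}(\T{hofib}) \to \cdots$, and identify the terms and the maps $\theta$, $\partial$; (4) note exactness and naturality in $A$ follow formally, so that $K_m^{\T{rel}}$ is a covariant functor and the sequence is natural, which is all that is asserted.

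The only genuine point requiring care — and what I expect to be the main obstacle, though it is a known one — is the functoriality and well-definedness of the topological $K$-theory space for a general Fr\'echet algebra: one must check that $A \mapsto GL(A^{\T{top}})$ is a functor to topological groups and that the plus construction can be performed continuously, so that $\iota$ and hence the homotopy fiber are natural. This is exactly the content developed in \cite{karoubiII}, so in the paper I would not reprove it; I would simply cite Karoubi's construction and state that the long exact sequence is the associated fibration sequence, perhaps with a one-line reminder that $K^{\T{rel}}$ is defined as the homotopy fiber of $K^{\T{alg}} \to K^{\T{top}}$. Since the theorem as stated only records a known result to be used later, the proof is essentially a pointer to \cite{karoubiII} together with the remark that the sequence is the long exact homotopy sequence of a fibration.
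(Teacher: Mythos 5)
Your proposal is correct and matches the paper's treatment: the paper states this result as background recalled from \cite{karoubiII} without proof, and the standard argument is exactly the one you describe — $K^{\E{rel}}_*(A)$ is defined via the homotopy fiber of the natural map from the algebraic to the topological $K$-theory space, and the long exact sequence is the long exact homotopy sequence of that fibration. Citing Karoubi, as you suggest, is all that is needed here.
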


The relative $K$-theory is linked to continuous cyclic homology by means of a
relative Chern character.

\begin{prop}\label{chrel}\cite{conkar,karoubiII}
For each Fréchet algebra $A$ there is a natural homomorphism of
degree minus one
\[
\E{ch}^{\E{rel}} : K_m^{\E{rel}}(A) \to HC^{\E{cont}}_{m-1}(A)
\]
from relative $K$-theory to continuous cyclic homology. This homomorphism is
termed \emph{the relative Chern character}.
\end{prop}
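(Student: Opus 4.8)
The plan is to obtain $\relche$ as the map induced on homotopy fibres by the Chern characters that already exist for algebraic and for topological $K$-theory. First recall the two absolute Chern characters attached to a Fr\'echet algebra $A$: the algebraic Chern character $\T{ch} : K_m(A) \to HN_m^{\T{cont}}(A)$ into continuous negative cyclic homology (Goodwillie; in the present continuous form, Connes--Karoubi), and the topological Chern character $\T{ch}^{\T{top}} : K_m^{\T{top}}(A) \to HP_m^{\T{cont}}(A)$ into continuous periodic cyclic homology. Recall next that Karoubi's relative group $K_m^{\T{rel}}(A)$ is, by construction, the $m^{\T{th}}$ homotopy group of the homotopy fibre of the comparison map $K(A) \to K^{\T{top}}(A)$; this is precisely what produces the long exact sequence of Theorem \ref{longrelI}. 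Finally recall the $SBI$ exact sequence $\ldots \to HC_{m-1}^{\T{cont}}(A) \to HN_m^{\T{cont}}(A) \to HP_m^{\T{cont}}(A) \to HC_{m-2}^{\T{cont}}(A) \to \ldots$ relating continuous negative, periodic and ordinary cyclic homology, which identifies continuous cyclic homology, shifted down by one, with the homotopy fibre of the canonical map $HN^{\T{cont}} \to HP^{\T{cont}}$.

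The key step is to check that the square
\[
\begin{CD}
K_m(A) @>>> K_m^{\T{top}}(A) \\
@VVV @VVV \\
HN_m^{\T{cont}}(A) @>>> HP_m^{\T{cont}}(A)
\end{CD}
\]
commutes and is natural in $A$; here the top map is the comparison of $K$-theories, the verticals are the algebraic and topological Chern characters, and the bottom map is the canonical one. Granting this compatibility, the universal property of homotopy fibres supplies a canonical map from the fibre of the top row to the fibre of the bottom row, which by the two identifications recalled above is precisely a natural homomorphism $K_m^{\T{rel}}(A) \to HC_{m-1}^{\T{cont}}(A)$; the degree shift by $-1$ is exactly the shift already visible in the $SBI$ sequence. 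That $\relche$ is additive, is natural, and is compatible with the boundary maps of the two long exact sequences is then inherited from the corresponding properties of the four maps in the square.

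An equivalent, more hands-on route, the one taken in \cite{conkar,karoubiII}, proceeds with explicit cocycles. A class in $K_m^{\T{rel}}(A)$ is represented by an algebraic $K$-theory datum over $A$ (for $m=1$, an invertible matrix; in general, a suitable loop in the $+$-construction) together with a chosen nullhomotopy of its image in the topological $K$-theory space. Applying the $(b,B)$-form of the Chern character to the algebraic datum yields a cyclic cycle whose periodic class vanishes precisely because of the nullhomotopy; transgressing the nullhomotopy yields a primitive for that periodic class, and the cycle together with this primitive assembles into a relative cocycle whose class in $HC_{m-1}^{\T{cont}}(A)$ is declared to be $\relche$ of the original class. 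Additivity and naturality are then checked directly.

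I expect the main obstacle to be the well-definedness underlying whichever description is adopted. In the homotopy-fibre picture it is precisely the commutativity of the square above in the continuous Fr\'echet setting, i.e.\ the assertion that the algebraic Chern character followed by $HN^{\T{cont}} \to HP^{\T{cont}}$ agrees, under the comparison $K \to K^{\T{top}}$, with the topological Chern character; this is delicate because the two Chern characters are built by genuinely different mechanisms. In the explicit picture the obstacle is showing that the relative cocycle produced by transgression is independent, up to a coboundary, of the chosen topological nullhomotopy. Both points are settled in \cite{conkar,karoubiII}, and for the present purposes we simply invoke them.
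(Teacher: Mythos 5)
The paper offers no proof of this statement: it is imported verbatim from \cite{conkar,karoubiII}, and the only content the paper adds in the surrounding text is the normalization $\T{ch}^{\T{rel}} = (-1)^m(m-1)!\,\underline{\T{ch}}^{\T{rel}}$. Your proposal is therefore not competing with an argument in the paper; judged as a sketch of what the cited sources establish, it is accurate. Your two routes are genuinely different in emphasis. The homotopy-fibre formulation (relative $K$-theory as the fibre of $K \to K^{\T{top}}$, continuous cyclic homology shifted by one as the fibre of $HN^{\T{cont}} \to HP^{\T{cont}}$ via the $SBI$ sequence) is the cleaner modern packaging, but its entire mathematical content is concentrated in the commutativity of your square, i.e.\ the compatibility of the Goodwillie-type negative Chern character with the topological one; in the continuous Fr\'echet setting that compatibility is essentially equivalent to the theorem being proved, so this route explains the shape of the statement more than it proves it. The explicit route is the one actually taken in \cite{conkar,karoubiII}: the relative character is defined directly on relative cycles (an algebraic datum together with a nullhomotopy of its topological image) by transgression, which avoids having to compare two independently constructed absolute characters, at the cost of a direct verification that the resulting class is independent of the chosen nullhomotopy. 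You correctly identify these as the delicate points and, like the paper, defer them to the references; that is the appropriate level of detail here, since the present paper uses $\relche$ purely as a black box (only its existence, naturality, and the normalization constant enter the construction of the additive and multiplicative characters).
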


In the above definition the relative Chern character is normalized as in
\cite{kaadII}. That is, we have
\[
\T{ch}^{\T{rel}} 
:= (-1)^m (m-1)! \, \underline{\T{ch}}^{\T{rel}}
: K_m^{\T{rel}}(A) \to HC^{\, \T{cont}}_{m-1}(A)
\]
where $\underline{\T{ch}}^{\T{rel}}$ denotes the Chern character as defined in
\cite[\S $4$]{conkar}. The additional constant ensures that the relative Chern
character is both multiplicative in an appropriate sense and induced by a
chain map. See also \cite{till}.

Let $\C F$ be a continuous $m$-summable Fredholm module over $A$. We can
compose the relative Chern character with the character on continuous cyclic
homology induced by $\C F$. In this way we obtain the \emph{additive
  character} on relative $K$-theory,
\[
\arr{ccc}{
\C A_{\C F} : K_m^{\T{rel}}(A) \to \cc
& \q & \C A_{\C F} 
= c_{m-1} \cdot (\tau^{\T{cont}}_{\C F} \circ \T{ch}^{\T{rel}})
}
\]
Here $c_{m-1} \in \Qq$ is the rational constant given by
\[
c_{m-1} = \fork{ccc}{
(-1)^{k+1} \frac{k!}{(2k)!} & \T{ for } & m-1 = 2k \T{ even} \\
- \frac{1}{2^{2k-1}\cdot (k-1)!} & \T{ for } & m-1 = 2k-1 \T{ odd}
}
\]
This constant is chosen in such a way that the multiplicative character which
we will define in the next section agrees with the multiplicative character
defined in \cite{conkar}.

The existence of the pairing \eqref{eq:paircychom} implies the following
result,

\begin{prop}\label{pairaddi}
For each Fréchet algebra $A$ the map
\[
\arr{ccc}{
\C A : FK^{m-1}_{\E{cont}}(A) \times K_m^{\E{rel}}(A) \to \cc
& \q & \C A([\C F],[\sigma]) \mapsto \C A_{\C F}([\sigma])
}
\]
yields a well-defined pairing between continuous finitely summable
$K$-homology and relative $K$-theory.
\end{prop}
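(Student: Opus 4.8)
The plan is to show that the proposed pairing $\C A([\C F],[\sigma]) = \C A_{\C F}([\sigma])$ is independent of the choice of representatives in both variables. Since $\C A_{\C F} = c_{m-1}\cdot(\tau^{\T{cont}}_{\C F}\circ \relcht)$ is by construction already a well-defined homomorphism $K_m^{\T{rel}}(A)\to\cc$ for each fixed continuous $m$-summable Fredholm module $\C F$ --- being the composition of the relative Chern character of Theorem \ref{chrel} with the character on continuous cyclic homology induced by the class $\T{Ch}^{\T{cont}}(\C F)\in HC^{m-1}_{\T{cont}}(A)$ --- the pairing is automatically additive and well-defined in the second variable. So the only thing that needs checking is that $\C A_{\C F}$ depends on $\C F$ only through its class $[\C F]\in FK^{m-1}_{\T{cont}}(A)$.

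First I would recall that the map $\C F\mapsto \C A_{\C F}$ factors through the continuous Chern-Connes character: since $\C A_{\C F} = c_{m-1}\cdot(\tau^{\T{cont}}_{\C F}\circ\relcht)$ and $\tau^{\T{cont}}_{\C F}$ is precisely the character on $HC_{m-1}^{\T{cont}}(A)$ induced by $\T{Ch}^{\T{cont}}(\C F)$, the additive character depends only on the class $\T{Ch}^{\T{cont}}(\C F)\in HC^{m-1}_{\T{cont}}(A)$ in continuous cyclic cohomology. Hence it suffices to show that the assignment $[\C F]\mapsto \T{Ch}^{\T{cont}}(\C F)$ is a well-defined homomorphism from $FK^{m-1}_{\T{cont}}(A)$ to $HC^{m-1}_{\T{cont}}(A)$. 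This is the content of the well-definedness of $\T{Ch}^{\T{cont}} : FK^{m-1}_{\T{cont}}(A)\to HC^{m-1}_{\T{cont}}(A)$ already established earlier in Subsection \ref{addcharpairing}, which in turn rests on the four standard identities for the Chern-Connes character (behaviour under direct sum, vanishing on degenerate modules, sign-change under inversion, invariance under unitary equivalence) together with the continuous invariance-under-perturbations result of Theorem \ref{cocyequivcontI}.

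Concretely, to verify invariance under the generators of $\sim_{mc}$: if $\C F_1\sim_{mc}\C F_2$ then by Definition \ref{stabym} there are continuous $m$-summable Fredholm modules $\C G_1,\C G_2,\C H$ and degenerate modules $\C D_1,\C D_2$ with $\C F_1\oplus\C G_1\oplus\C G_1^{-1}\oplus\C D_1\oplus\C H\sim_m \C F_2\oplus\C G_2\oplus\C G_2^{-1}\oplus\C H$. Applying the additivity identity, the vanishing on degenerate modules, and the identity $\T{Ch}^{\T{cont}}(\C G_i)+\T{Ch}^{\T{cont}}(\C G_i^{-1})=0$ to both sides reduces the problem to showing that $\T{Ch}^{\T{cont}}$ is invariant under the relation $\sim_m$ generated by unitary equivalence and $m$-summable perturbation. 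Unitary equivalence is handled by the corresponding standard identity, and $m$-summable perturbation is precisely Theorem \ref{cocyequivcontI}. Therefore $\T{Ch}^{\T{cont}}$ descends to $FK^{m-1}_{\T{cont}}(A)$, and composing with the fixed homomorphism $c_{m-1}\cdot\relcht$ gives a well-defined bilinear pairing $\C A$.

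I do not expect a serious obstacle here: the statement is essentially a bookkeeping consequence of results already proved. The only point requiring a modicum of care is checking that the degenerate modules $\C D_1,\C D_2$ entering Definition \ref{stabym} genuinely contribute zero to $\T{Ch}^{\T{cont}}$ --- i.e.\ that the continuous index cocycle of a continuous degenerate Fredholm module vanishes identically (not merely up to coboundary) --- and that the inverse $\C F^{-1}$ as defined in \eqref{eq:invfred} indeed satisfies $\T{Ch}^{\T{cont}}(\C F)+\T{Ch}^{\T{cont}}(\C F^{-1})=0$; both follow by inspection of the explicit trace formula for $\tau_{\C F}$, with the sign in the even case produced by replacing $\gamma$ by $-\gamma$ on $H^{\T{op}}$. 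Once these are in hand, bilinearity of $\C A$ is immediate from additivity of $\T{Ch}^{\T{cont}}$ in the first variable and linearity of $\tau^{\T{cont}}_{\C F}\circ\relcht$ in the second.
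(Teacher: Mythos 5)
Your argument is correct and is exactly the paper's route: the paper derives Theorem \ref{pairaddi} directly from the existence of the pairing \eqref{eq:paircychom}, i.e.\ from the fact that $\T{Ch}^{\T{cont}}$ descends to $FK^{m-1}_{\T{cont}}(A)$ via the four standard identities for the Chern--Connes character together with Theorem \ref{cocyequivcontI}, and then composes with the fixed homomorphism $c_{m-1}\cdot\T{ch}^{\T{rel}}$. Nothing further is needed.
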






\subsection{The multiplicative character and pairings with algebraic
  $K$-theory}\label{multcharpairing}
In this subsection we will show that the finitely summable $K$-homology groups
pair with algebraic $K$-theory. It was already noted by A. Connes and
M. Karoubi in \cite{conkar} that each finitely summable Fredholm module gives
rise to a multiplicative character on algebraic $K$-theory. An important
ingredient in their construction is the additive character which we considered
in the last section. The well-definedness of the pairing given in Theorem
\ref{pairaddi} will therefore aid us in defining our pairing between finitely
summable $K$-homology and algebraic $K$-theory. Let us start by giving some
details on the Connes-Karoubi multiplicative character. To this end, we will
need the concept of "universal Fredholm algebras".

Let $\C H$ be a fixed separable infinite dimensional Hilbert space and let $m
\in \nn$ be a positive integer. 

Suppose that $(m-1)$ is even. We let $\C M^{m-1} \subseteq \C L(\C H \oplus \C
H)$ denote the $\cc$-subalgebra of diagonal operators, $d(x,y)$, with
difference $x-y \in \C L^m(\C H)$ in the $m^{\T{th}}$ Schatten ideal. This
$\cc$-algebra becomes a unital Banach algebra when equipped with the norm
\[
\arr{ccc}{
\|x\| = \|x\|_\infty + \|[F_{m-1},x]\|_m & \q & x \in \C M^{m-1}
}
\]
Here $F_{m-1}$ denotes the bounded operator
\[
F_{m-1} = \mat{cc}{0 & 1 \\ 1 & 0} \in \C L(\C H \oplus \C H)
\]
The notations $\|\cdot\|_\infty$ and $\|\cdot\|_m$ refer to the operator norm
and the norm on the $m^{\T{th}}$ Schatten ideal.

Suppose that $(m-1)$ is odd. We let $\C M^{m-1} \subseteq \C L(\C H \oplus \C
H)$ denote the $\cc$-subalgebra of operators with antidiagonal in the
$m^{\T{th}}$ Schatten ideal. This $\cc$-algebra becomes a unital Banach
algebra when equipped with the norm
\[
\arr{ccc}{
\|x\| = \|x\|_\infty + \|[F_{m-1},x]\|_m & \q & x \in \C M^{m-1}
}
\]
Here $F_{m-1}$ denotes the bounded operator
\[
F_{m-1} = \mat{cc}{1 & 0 \\ 0 & -1} \in \C L(\C H \oplus \C H)
\]

In both the even and the odd case there is a universal $m$-summable Fredholm
module over $\C M^{m-1}$ given by
\[
\C F_{m-1} = (i,\C H \oplus \C H, F_{m-1})
\]
Here $i : \C M^{m-1} \to \C L(\C H \oplus \C H)$ is the inclusion. The grading
operator is given by the diagonal matrix $\gamma = d(1,-1)$ when $(m-1)$ is
even. Remark that the universal $m$-summable Fredholm module is continuous in
the sense of Definition \ref{finfred}.

The notation
\[
\C A_{m-1} : K_m^{\T{rel}}(\C M^{m-1}) \to \cc
\]
will refer to the additive character induced by the universal $m$-summable
Fredholm module. See Section \ref{addcharpairing}.

The topological $K$-groups of the unital Banach algebras $\C M^{m-1}$ are
computed by A. Connes and M. Karoubi.

\begin{lemma}\cite[Théorème $2.8$]{conkar}
For any $m \in \nn$ the topological $K$-theory of $\C M^{m-1}$ is given by
\[
\begin{split}
K_0^{\E{top}}(\C M^{m-1}) & = \fork{ccc}{
\{0\} & \T{for} & (m-1) \T{ odd} \\
\zz & \T{for} & (m-1) \T{ even}
} \\
K_1^{\E{top}}(\C M^{m-1}) & = \fork{ccc}{
\zz & \T{for} & (m-1) \T{ odd} \\
\{0\} & \T{for} & (m-1) \T{ even} \\
}
\end{split}
\]
\end{lemma}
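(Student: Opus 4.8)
The plan is to realise $\C M^{m-1}$ as the total algebra of a short exact sequence of Banach algebras and to run the six-term exact sequence in topological $K$-theory. Three classical inputs will be used: Kuiper's theorem, $K^{\T{top}}_*(\C L(\C H)) = 0$; the fact that the Schatten ideals carry the same topological $K$-theory as the compacts, so $K^{\T{top}}_0(\C L^m(\C H)) = \zz$ and $K^{\T{top}}_1(\C L^m(\C H)) = 0$ (here one uses that $\C L^m(\C H)$ is a two-sided ideal of $\C K(\C H)$ in which the finite rank operators are dense and which is stable under holomorphic functional calculus); and the $K$-theory of the Calkin algebra $\C Q := \C L(\C H)/\C K(\C H)$, namely $K^{\T{top}}_0(\C Q) = 0$ and $K^{\T{top}}_1(\C Q) = \zz$.

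\textbf{Even case, $m-1 = 2k$.} The assignment $d(x,y) \mapsto y$ is a surjective homomorphism of Banach algebras with kernel $\{d(z,0) : z \in \C L^m(\C H)\}$, and the latter, with the norm inherited from $\C M^{m-1}$, is isomorphic to $\C L^m(\C H)$ with a norm equivalent to its Schatten norm. One thus gets
\[
0 \to \C L^m(\C H) \to \C M^{m-1} \to \C L(\C H) \to 0,
\]
and this extension is split by the algebra homomorphism $y \mapsto d(y,y)$. Hence $K^{\T{top}}_j(\C M^{m-1}) \cong K^{\T{top}}_j(\C L(\C H)) \oplus K^{\T{top}}_j(\C L^m(\C H))$, which by the inputs above is $\zz$ for $j=0$ and $0$ for $j=1$; the class of a rank one projection $d(e,0)$ generates $K_0$.

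\textbf{Odd case, $m-1 = 2k-1$.} Now the extension is not split and the antidiagonal operators do not form a subalgebra, so one passes to the Calkin algebra. Put $J := \C M^{m-1} \cap \C K(\C H \oplus \C H)$, the operators of $\C M^{m-1}$ whose diagonal blocks are compact; this is a closed ideal, and sending an operator to the pair of Calkin classes of its two diagonal blocks gives a short exact sequence of Banach algebras
\[
0 \to J \to \C M^{m-1} \to \C Q \oplus \C Q \to 0.
\]
The finite rank operators are dense in $J$ (the antidiagonal blocks are approximated in Schatten norm, the diagonal ones in operator norm), so a routine adaptation of the computation of $K^{\T{top}}_*(\C K(\C H))$ gives $K^{\T{top}}_0(J) = \zz$ — any rank one projection is a generator, and the two corner projections $e_{11},e_{22}$ are Murray--von Neumann equivalent in $J$ via the antidiagonal rank one partial isometry, which lies in $\C L^m(\C H) \subseteq J$ — and $K^{\T{top}}_1(J) = 0$. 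Feeding this, together with $K^{\T{top}}_*(\C Q \oplus \C Q) = (0,\zz^2)$, into the six-term sequence collapses it to the exact sequence
\[
0 \to K^{\T{top}}_1(\C M^{m-1}) \to \zz^2 \xrightarrow{\ \partial\ } \zz \to K^{\T{top}}_0(\C M^{m-1}) \to 0.
\]
The connecting map $\partial$ is the index map: by naturality under the corner inclusion of the Calkin extension $0 \to \C K(\C H) \to \C L(\C H) \to \C Q \to 0$ into the displayed extension, $\partial$ restricted to either $\C Q$-summand is (up to the isomorphism $K^{\T{top}}_0(\C K(\C H)) \cong K^{\T{top}}_0(J)$) the usual index isomorphism $K^{\T{top}}_1(\C Q) \xrightarrow{\sim} \zz$, so $\partial$ is nonzero on each summand and hence surjective. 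Moreover $\partial(1,-1) = 0$ by exactness, since the unitary $\mat{cc}{S^* & 0 \\ 1 - SS^* & S} \in (\C M^{m-1})^{+}$ built from the unilateral shift $S$ lies over the pair $([S^*],[S])$, whose Calkin diagonal blocks have Fredholm indices $+1$ and $-1$. Therefore $\ker\partial = \zz(1,-1) \cong \zz$ and $\partial$ is onto, whence $K^{\T{top}}_0(\C M^{m-1}) = 0$ and $K^{\T{top}}_1(\C M^{m-1}) = \zz$, the above unitary generating $K_1$.

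\textbf{Main obstacle.} The crux is the odd case, and within it the determination of $\partial\colon \zz^2 \to \zz$: it is precisely the shape of $\partial$ (surjective, one-dimensional kernel) that selects the answer $(0,\zz)$ over the a priori possibilities $(\zz,\zz^2)$ or $(\zz^2,\zz)$, so the index computation cannot be bypassed. A subsidiary technical point, to be checked once, is that the six-term exact sequence in topological $K$-theory is available for these extensions of Banach — not $C^*$ — algebras; this is standard, as topological $K$-theory of Banach algebras satisfies excision, and in the even case the extension even admits an algebra splitting.
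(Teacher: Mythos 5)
The paper does not prove this lemma at all: it is quoted verbatim from Connes--Karoubi and the only ``proof'' is the citation to \cite[Th\'eor\`eme $2.8$]{conkar}. Your argument is therefore an independent reconstruction, and it is essentially correct: the even case via the split extension $0 \to \C L^m(\C H) \to \C M^{m-1} \to \C L(\C H) \to 0$ is clean, and in the odd case you correctly identify the two decisive points, namely that the antidiagonal rank-one partial isometries (which lie in $\C L^m$, hence in the ideal) force $K_0$ of the ideal to be $\zz$ rather than $\zz^2$, and that the connecting map $\zz^2 \to \zz$ is onto with kernel generated by $(1,-1)$, which you verify by exhibiting the explicit unitary lift of $([S^*],[S])$. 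The one step you should tighten is the assertion that $K^{\T{top}}_*(J) = (\zz,0)$ is a ``routine adaptation'' of the computation for $\C K(\C H)$: note that $J$ is an ideal of $\C M^{m-1}$ but \emph{not} of $\C L(\C H \oplus \C H)$, and it is not obviously spectrally invariant in $\C K(\C H \oplus \C H)$ (inverting $1+x$ need not preserve the Schatten condition on the antidiagonal blocks), so a bare appeal to density of finite-rank operators is a little thin. A cleaner route to the same conclusion is a second excision argument: the map sending a matrix in $J$ to its diagonal blocks modulo $\C L^m(\C H)$ is an algebra homomorphism (the cross terms land in $\C L^{m/2} \subseteq \C L^m$), giving an extension $0 \to \C L^m(\C H \oplus \C H) \to J \to \C K(\C H)/\C L^m(\C H) \oplus \C K(\C H)/\C L^m(\C H) \to 0$ whose quotient has vanishing topological $K$-theory, whence $K^{\T{top}}_*(J) \cong K^{\T{top}}_*(\C L^m(\C H \oplus \C H)) = (\zz,0)$. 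A last cosmetic point: in the odd case $\C M^{m-1}$ already contains the identity, so no unitization is needed for your shift unitary.
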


In the case of the unital Banach algebra $\C M^{m-1}$ the long exact sequence
of $K$-groups from Theorem \ref{longrelI} then reads
\[
\begin{CD}
\ldots  @>\theta>> K_{m+1}(\C M^{m-1}) @>>> \zz  @>\partial>> 
K_m^{\T{rel}}(\C M^{m-1}) @>\theta>> K_m(\C M^{m-1}) @>>> 0
\end{CD}
\]
In particular, the homomorphism $\theta : K_m^{\T{rel}}(\C M^{m-1}) \to
K_m(\C M^{m-1})$ is surjective. Furthermore, since both the boundary map
and the universal additive character are homomorphisms we get that
\[
\T{Im}(\C A_{m-1} \circ \partial) = c \zz \subseteq \cc
\]
where $c \in \cc$ is some constant. In \cite{conkar} this constant is
calculated to be $c = (2 \pi i)^{\lceil m/2 \rceil}$. These results allow us
to define the universal multiplicative character.

\begin{definition}
By the \emph{universal $m$-summable multiplicative character} we will
understand the homomorphism
\[
\C M_{m-1} : K_m(\C M^{m-1}) \to \cc/(2 \pi i)^{\lceil m/2 \rceil} \zz
\]
given by
\[
\C M_{m-1}( \theta(y)) = [\C A_{m-1}(y)]
\]
Here $[\, \cdot \, ] : \cc \to \cc/(2 \pi i)^{\lceil m/2 \rceil} \zz$ denotes
the quotient map.
\end{definition}

Let $A$ be an algebra over $\cc$ and let $\C F$ be an $m$-summable Fredholm
module over $A$.

Suppose that $(m-1)$ is odd and let $P = (F+1)/2$. Since the Hilbert space
$PH$ and $(1-P)H$ are infinite dimensional by assumption, we can identify both
of them with the "universal" Hilbert space $\C H$. In this way we obtain an
algebra homomorphism
\[
\pi_{\C F} : A \to \C L(\C H \oplus \C H)
\]
which is well defined up to conjugation by a diagonal unitary operator. 

Suppose that $(m-1)$ is even. By assumption both of the eigenspaces $H^+$ and
$H^-$ for the grading operator $\gamma \in \C L(H)$ are infinite
dimensional. We can thus identify each of them with the "universal" Hilbert
space $\C H$. In this way we obtain a unital algebra homomorphism
\[
\pi_{\C F} : A \to \C L(\C H \oplus \C H)
\]
which is well defined up to conjugation by a diagonal unitary operator.

In both the even and the odd case the condition on the commutator $[F,\pi(a)]
\in \C L^m(H)$ ensures that the algebra homomorphism $\pi_{\C F}$ factorizes
through the Banach algebra $\C M^{m-1}$. The algebra homomorphism $A \to \C
M^{m-1}$ will also be denoted by $\pi_{\C F}$. Remark that this algebra
homomorphism is continuous whenever $A$ is a locally convex topological
algebra and $\C F$ is continuous.

\begin{remark}
The terminology "universal", which we have used for the $m$-summable Fredholm
module $\C F_{m-1}$, is justified by the following identity in $FK^{m-1}(A)$,
\[
FK(\pi_{\C F})([\C F_{m-1}]) = [\C F]
\]
There is a similar identity in the continuous case as well.
\end{remark}

We are now in position to give the general definition of the multiplicative
character.

\begin{definition}
By the \emph{multiplicative character} of $\C F$ we will understand the
homomorphism
\[
\arr{ccc}{
\C M_{\C F} : K_m(A) \to \cc/(2 \pi i)^{\lceil m/2 \rceil}\zz 
& \q & \C M_{\C F} = \C M_{m-1} \circ (\pi_{\C F})_*
}
\]
Here $(\pi_{\C F})_* : K_m(A) \to K_m(\C M^{m-1})$ denotes the homomorphism on
algebraic $K$-theory induced by the algebra homomorphism $\pi_{\C F} : A \to
\C M^{m-1}$.
\end{definition}

Let $\C F$ and $\C G$ be two $m$-summable Fredholm modules over $A$ and let
$[x] \in K_m(A)$. The following properties for the multiplicative character
are proved in \cite{conkar},
\begin{equation}
\begin{split}
& \C M_{\C F \oplus \C G}([x]) = \C M_{\C F}([x]) + \C M_{\C G}([x]) \\
& \C M_{\C F^{-1}}([x]) = - \C M_{\C F}([x]) \\
& \C M_{\C F}([x]) = \C M_{\C G}([x]) \qqqq \T{whenever } \C F \sim_u \C G
\end{split}
\end{equation}

We would like to show that the multiplicative character is invariant under
stable $m$-summable perturbations. See Definition \ref{stabym}. Indeed, this
would ensure us that the map
\[
\arr{ccc}{
\C M : FK^{m-1}(A) \times K_m(A) \to \cc/(2 \pi i)^{\lceil m/2 \rceil} \zz
& \q & \C M([\C F],[x]) = \C M_{\C F}([x])
}
\]
yields a well-defined pairing between finitely summable $K$-homology and
algebraic $K$-theory.

In view of the above relations we will only need to consider the behaviour of
the multiplicative character with respect to finitely summable perturbations
and degeneracies. The case of degeneracies is carried out by the next lemma.

\begin{lemma}\label{degvan}
Suppose that $\C D$ is an $m$-summable degenerate Fredholm module over
$A$. Then the homomorphism of abelian groups
\[
(\pi_{\C D})_* : K_m(A) \to K_m(\C M^{m-1})
\]
vanishes identically. In particular, the multiplicative character associated
to $\C D$ is trivial.
\end{lemma}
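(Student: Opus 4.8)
The plan is to exploit that a \emph{degenerate} module satisfies $[F,\pi(a)]=0$ exactly, so that the algebra homomorphism $\pi_{\C D} : A \to \C M^{m-1}$ in fact takes values in a subalgebra of $\C M^{m-1}$ whose algebraic $K$-theory vanishes; functoriality of $K_m$ then forces $(\pi_{\C D})_* = 0$. Recall how $\pi_{\C D}$ is built: when $(m-1)$ is odd one identifies the eigenspaces $PH$ and $(1-P)H$ of $F$ (infinite-dimensional by our standing assumption) with $\C H$, so that $H$ becomes $\C H \oplus \C H$ and $F$ becomes the diagonal operator $F_{m-1} = d(1,-1)$; when $(m-1)$ is even one identifies the eigenspaces $H^+, H^-$ of $\gamma$ with $\C H$, so that $\gamma$ becomes $d(1,-1)$ and $F$ becomes the flip $F_{m-1}$ (the latter after an irrelevant conjugation by a diagonal unitary, which is an inner automorphism of $\C M^{m-1}$ and hence trivial on $K$-theory).

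I would then feed in degeneracy. Since all relations of Definition \ref{finfred} hold on the nose for $\C D$, we have $[F,\pi(a)]=0$ for all $a \in A$. In the odd case this means $\pi(a)$ commutes with $P=(F+1)/2$, so after the identification $\pi_{\C D}(a) = d(\pi^+(a),\pi^-(a))$ for two algebra homomorphisms $\pi^\pm : A \to \C L(\C H)$. In the even case $\pi(a)$ commutes with $\gamma = d(1,-1)$ (hence is of the form $d(x,y)$) and, commuting in addition with the antidiagonal $F_{m-1}$, must have $x=y$, so $\pi_{\C D}(a) = d(\pi^+(a),\pi^+(a))$. In both cases $\pi_{\C D}(a)$ is block-diagonal; note that block-diagonal operators do lie in $\C M^{m-1}$, since their off-diagonal part is $0 \in \C L^m(\C H)$. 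Hence $\pi_{\C D}$ factors through the unital subalgebra $\C B$ of block-diagonal operators, and $\C B$ is isomorphic to $\C L(\C H) \oplus \C L(\C H)$ (odd case) or to $\C L(\C H)$ (even case).

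The remaining ingredient is the classical fact that $\C L(\C H)$ is flasque, and therefore has vanishing algebraic $K$-theory in every degree: an isomorphism $\C H \cong \bigoplus_{j \in \nn} \C H$ yields a (non-unital) ring endomorphism $\Phi(T) = \bigoplus_j T$ of $\C L(\C H)$ with $\Phi \oplus \T{Id}$ conjugate to $\Phi$, so that $(\Phi \oplus \T{Id})_* = \Phi_*$, while additivity gives $(\Phi \oplus \T{Id})_* = \Phi_* + \T{Id}_*$; hence $\T{Id}_* = 0$ on $K_*(\C L(\C H))$. A finite product of flasque rings is flasque, so $K_m(\C B)=0$ for all $m \in \nn$; combined with the factorization $A \to \C B \hookrightarrow \C M^{m-1}$ this shows that $(\pi_{\C D})_* : K_m(A) \to K_m(\C M^{m-1})$ is the zero map. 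Finally, since $\C M_{\C D} = \C M_{m-1} \circ (\pi_{\C D})_*$ by definition, the multiplicative character of $\C D$ is identically zero.

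I expect the genuine content of the argument to be the flasqueness of $\C L(\C H)$, which I would simply quote as classical; everything else is bookkeeping, namely the observation that degeneracy makes $\pi_{\C D}$ land in the block-diagonal subalgebra. The only point that must be stated with care is that the eigenspace identifications may be arranged so that $F$ goes exactly to $F_{m-1}$ and $\gamma$ to $d(1,-1)$ --- harmless, since all such identifications differ by a diagonal unitary and hence induce the same homomorphism on algebraic $K$-theory.
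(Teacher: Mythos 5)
Your proposal is correct and follows essentially the same route as the paper: degeneracy forces $\pi_{\C D}$ to factor through the block-diagonal subalgebra, isomorphic to $\C L(\C H)\oplus\C L(\C H)$ in the odd case and to $\C L(\C H)$ in the even case, and these rings have vanishing algebraic $K$-theory. The only cosmetic difference is that you sketch the flasqueness/swindle argument for $K_*(\C L(\C H))=0$, whereas the paper simply cites de la Harpe--McDuff for this classical fact.
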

\begin{proof}
Suppose that $(m-1)$ is odd. The induced algebra homomorphism
\[
\pi_{\C D} : A \to \C M^{m-1}
\]
is then diagonal. In particular it factorizes through the direct sum of
bounded operators $\C L(\C H) \oplus \C L(\C H)$. But this unital ring has
trivial algebraic $K$-theory by \cite{macduff}.

In the even case the induced algebra homomorphism factorizes through $\C L(\C
H)$ so the same argument applies.
\end{proof}

Suppose that we have two $m$-summable Fredholm modules $\C F_1 = (\pi,H,F_1)$
and $\C F_2 = (\pi,H,F_2)$ over $A$ which are $m$-summable perturbations of
each other. In order to show that their multiplicative character coincide we
spell out the effect of the perturbation at the level of the associated
algebra homomorphisms $\pi_{\C F_1} : A \to \C M^{m-1}$ and $\pi_{\C F_2} : A
\to \C M^{m-1}$.

\begin{lemma}\label{msummeff}
There exists a continuous $m$-summable Fredholm module $\C T$ over $\C
M^{m-1}$ such that
\begin{enumerate}
\item The diagram of algebra homomorphisms
\begin{equation}\label{eq:diansum}
\begin{CD}
A @>\E{Id}>> A \\
@V\pi_{\C F_1}VV @V\pi_{\C F_2}VV \\
\C M^{m-1} @>\pi_{\C T}>> \C M^{m-1}
\end{CD}
\end{equation}
is commutative.
\item The continuous $m$-summable Fredholm module $\C T$ is an $m$-summable
  perturbation of the universal $m$-summable Fredholm module.
\end{enumerate}
\end{lemma}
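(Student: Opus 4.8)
\textbf{Proof proposal for Lemma \ref{msummeff}.}

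The plan is to construct $\C T$ directly as a Fredholm module over $\C M^{m-1}$ on the Hilbert space $\C H \oplus \C H$, using the operators $F_1$ and $F_2$ to build an operator that plays the role of $F$. The key observation is that $\pi_{\C F_1}$ and $\pi_{\C F_2}$ are obtained from the same representation $\pi : A \to \C L(H)$ by identifying the $\pm 1$ eigenspaces of $F_1$, respectively $F_2$ (or the $\pm$ eigenspaces of $\gamma$ in the even case) with $\C H$. The difference between the two homomorphisms is therefore governed by the "change of polarization" induced by passing from $F_1$ to $F_2$, and since $F_2 - F_1 = T \in \C L^m(H)$, this change is an $m$-summable perturbation. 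Concretely, I would choose unitaries $u_1, u_2 : H \to \C H \oplus \C H$ implementing the two identifications, set $\pi_{\C F_i} = u_i \pi(\cdot) u_i^*$, and then take $\C T = (\pi_{\C T}, \C H \oplus \C H, F_{\C T})$ where $\pi_{\C T} := (u_2 u_1^*)(\cdot)(u_2 u_1^*)^*$ composed appropriately, with $F_{\C T}$ obtained by transporting $F_1$ via $u_2$, i.e. $F_{\C T} = u_2 F_1 u_2^*$. Then by construction $\pi_{\C T} \circ \pi_{\C F_1} = \pi_{\C F_2}$ as algebra homomorphisms $A \to \C L(\C H \oplus \C H)$, which gives the commutativity of \eqref{eq:diansum} once one checks everything lands in $\C M^{m-1}$.

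First I would verify that $\pi_{\C T}$ factors through $\C M^{m-1}$: one must check that $[F_{m-1}, \pi_{\C T}(x)] \in \C L^m(\C H \oplus \C H)$ for every $x \in \C M^{m-1}$, and that the relevant continuity estimate $\|[F_{m-1}, \pi_{\C T}(x)]\|_m \leq C\|x\|$ holds. This reduces, after unwinding the identifications, to the statement that conjugating by $u_2 u_1^*$ changes commutators with $F_{m-1}$ by elements of $\C L^m$; and this in turn follows from $F_2 - F_1 = T \in \C L^m(H)$ together with the fundamental identity $FT + TF + T^2 = 0$ of Lemma \ref{fundid}, exactly as in the Schatten-ideal bookkeeping already carried out in Subsection \ref{cobordpert}. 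Next I would verify item (2): that $\C T$ is an $m$-summable perturbation of $\C F_{m-1} = (i, \C H \oplus \C H, F_{m-1})$. The representation part of $\C T$ is the inclusion $i$ up to the harmless identifications, so the only thing to check is $F_{\C T} - F_{m-1} \in \C L^m(\C H \oplus \C H)$ (and, in the even case, that the grading operators agree). Since $F_{\C T}$ is the transported copy of $F_1$ and $F_{m-1}$ is the standard self-adjoint unitary, their difference is again controlled by $T$, hence lies in the $m$-th Schatten ideal.

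The main obstacle I anticipate is purely bookkeeping: making the identifications $u_1, u_2$ precise and canonical enough that all the diagrams literally commute on the nose (not merely up to conjugation by a diagonal unitary), since $\pi_{\C F}$ was only defined up to such conjugation. The clean way around this is to fix the identifications once and for all using the \emph{same} splitting of $H$ for both $F_1$ and $F_2$ — which is legitimate precisely because $F_2 - F_1 \in \C L^m(H)$ forces the two polarizations to be a finitely summable perturbation of one another, so one polarization suffices to identify $H \cong \C H \oplus \C H$ and then $F_1, F_2$ become two operators $F_{m-1}$, $F_{m-1} + (\text{element of }\C L^m)$ on the fixed space. With that normalization, $\pi_{\C T}$ is simply the identity homomorphism of $\C M^{m-1}$ (or an inner automorphism thereof), the diagram \eqref{eq:diansum} commutes trivially, and $\C T$ is visibly the universal module perturbed by the $\C L^m$-operator $F_2 - F_1$. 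Continuity of $\pi_{\C T}$ and of $a \mapsto [F_{\C T}, \pi_{\C T}(a)]$ then follows from the continuity of the structure maps of $\C M^{m-1}$ and the bound on $\|[F_{m-1}, \cdot]\|_m$ built into the norm on $\C M^{m-1}$.
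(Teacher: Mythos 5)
Your overall strategy is the paper's: fix diagonal unitaries $u_1 : P_1H \oplus (1-P_1)H \to \C H \oplus \C H$ and $u_2 : P_2H \oplus (1-P_2)H \to \C H \oplus \C H$, so that $\pi_{\C F_i} = u_i \pi(\cdot) u_i^*$, and let $\C T$ be the universal module with its operator perturbed by (a conjugate of) $T = F_2 - F_1$, with $\pi_{\C T}$ the inner automorphism $x \mapsto u_2u_1^* \, x \, u_1u_2^*$ of $\C M^{m-1}$. Two points in your write-up need correction, however. First, your choice $F_{\C T} = u_2F_1u_2^*$ is the wrong transport: the homomorphism $\pi_{\C T}$ that the lemma refers to must be the one \emph{canonically induced} by the Fredholm module $\C T$ (this is what is used later, via the identity $\C A_{m-1}\circ(\pi_{\C T})_* = \C A_{\C T}$, in the proof of the pairing theorem), and the module inducing $x \mapsto u_2u_1^* x u_1u_2^*$ is $\C T = (i, \C H\oplus\C H, u_1F_2u_1^*)$, not $(i,\C H\oplus\C H, u_2F_1u_2^*)$; with your choice the induced homomorphism would be $x \mapsto u_1u_2^* x u_2u_1^*$, which makes the diagram commute in the wrong direction. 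The paper takes $u_1F_2u_1^* = F_{m-1} + u_1Tu_1^*$, which is visibly an $m$-summable perturbation of $\C F_{m-1}$ and whose eigenspace identification with $\C H \oplus \C H$ is exactly $u_2u_1^*$.

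Second, the ``clean way'' of your last paragraph overreaches: you cannot normalize so that $\pi_{\C T}$ is the identity. The homomorphism $\pi_{\C F_2}$ is pinned down by the eigenspace decomposition of $F_2$ and is ambiguous only up to conjugation by a \emph{diagonal} unitary, whereas $u_2u_1^*$ is genuinely off-diagonal (its off-diagonal part lies in $\C L^m(\C H \oplus \C H)$, which is precisely why it is a unitary of $\C M^{m-1}$ and why $\pi_{\C T}$ maps $\C M^{m-1}$ into itself). Using the $F_1$-polarization to define $\pi_{\C F_2}$ would replace it by a homomorphism outside the declared ambiguity class. The correct statement — and the one your parenthetical ``(or an inner automorphism thereof)'' gestures at — is that $\pi_{\C T}$ is the nontrivial inner automorphism $x \mapsto u_2u_1^* x u_1u_2^*$, which is exactly the paper's conclusion. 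With these two adjustments your argument coincides with the paper's proof.
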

\begin{proof}
By assumption the difference $F_2-F_1 = T \in \C L^m(H)$ lies in the
$m^{\T{th}}$ Schatten ideal.

Suppose that $(m-1)$ is odd. Let $P_1 = \frac{F_1+1}{2}$ and let $P_2 =
\frac{F_2+1}{2}$. Let $u_1 : P_1H \oplus (1-P_1)H \to \C H \oplus \C H$ and
$u_2 : P_2H \oplus (1-P_2)H \to \C H \oplus \C H$ denote some diagonal unitary
operators. The algebra homomorphisms
\[
\arr{ccc}{
\pi_{\C F_1} : A \to \C M^{m-1} & \T{and} & \pi_{\C F_2} : A \to \C M^{m-1}
}
\]
are then given by
\[
\arr{ccc}{
\pi_{\C F_1} = u \pi u^* & \T{and} & \pi_{\C F_2} = v \pi v^*
}
\]
We define the continuous $m$-summable Fredholm module $\C T$ over $\C M^{m-1}$
by $\C T := (i, \C H \oplus \C H, uF_2u^* )$. Clearly $\C T$ is an
$m$-summable perturbation of the universal $m$-summable Fredholm
module. Furthermore, the algebra homomorphism associated to $\C T$ is given by
\[
\arr{ccc}{
\pi_{\C T} : \C M^{m-1} \to \C M^{m-1} & \q & \pi_{\C T}(x) = v u^* x u v^*
}
\]
This proves the odd case of the lemma. The even case follows by similar
considerations.
\end{proof}

We are now in position to prove the main result of this paper.

\begin{prop}
For each positive integer $m \in \nn$ there is a well-defined pairing
\[
FK^{m-1}(A) \times K_m(A) \to \cc/(2 \pi i)^{\lceil m/2 \rceil}\zz
\]
given by the formula
\[
\arr{ccc}{
([\C F],[x]) \mapsto \C M_{\C F}([x]) & \q & [\C F] \in FK^{m-1}(A), \, [x]
\in K_m(A)
}
\]
\end{prop}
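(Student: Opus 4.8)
The plan is to verify the three conditions required for the map $([\C F],[x]) \mapsto \C M_{\C F}([x])$ to descend to a pairing on $FK^{m-1}(A) \times K_m(A)$, recalling that by Definition \ref{stabym} the equivalence relation $\sim_{mc}$ is generated by unitary equivalence, direct sums with degenerate modules, direct sums with inverse pairs $\C G \oplus \C G^{-1}$, direct sums with an auxiliary module $\C H$ appearing on both sides, and $m$-summable perturbations. The invariance under unitary equivalence, the behaviour under direct sums, and the behaviour under passing to inverses are already recorded in the displayed relations immediately preceding Lemma \ref{degvan}. Invariance under adjoining a degenerate module is Lemma \ref{degvan}. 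The only remaining point is invariance under $m$-summable perturbations, which is where the work lies.

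So the first step is to assume $\C F_1 = (\pi, H, F_1)$ and $\C F_2 = (\pi, H, F_2)$ are $m$-summable perturbations of each other and to show $\C M_{\C F_1} = \C M_{\C F_2}$ as homomorphisms $K_m(A) \to \cc/(2\pi i)^{\lceil m/2\rceil}\zz$. Here I would invoke Lemma \ref{msummeff}: it produces a continuous $m$-summable Fredholm module $\C T$ over $\C M^{m-1}$ which is an $m$-summable perturbation of the universal module $\C F_{m-1}$ and which makes the square \eqref{eq:diansum} commute, i.e. $\pi_{\C T} \circ \pi_{\C F_1} = \pi_{\C F_2}$ as algebra homomorphisms $A \to \C M^{m-1}$. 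Applying algebraic $K$-theory functorially gives $(\pi_{\C T})_* \circ (\pi_{\C F_1})_* = (\pi_{\C F_2})_*$ on $K_m$. Since $\C M_{\C F_i} = \C M_{m-1} \circ (\pi_{\C F_i})_*$ by definition, it suffices to show that $\C M_{m-1} \circ (\pi_{\C T})_* = \C M_{m-1}$ as homomorphisms $K_m(\C M^{m-1}) \to \cc/(2\pi i)^{\lceil m/2\rceil}\zz$; equivalently, by the universal-module remark, that $\C M_{\C T} = \C M_{\C F_{m-1}} = \C M_{m-1}$.

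The key step is therefore reducing the equality $\C M_{\C T} = \C M_{m-1}$ to the additive level, where Theorem \ref{pairaddi} and its ingredients apply. Using surjectivity of $\theta : K_m^{\T{rel}}(\C M^{m-1}) \to K_m(\C M^{m-1})$ (from the long exact sequence of Theorem \ref{longrelI}, as recorded before the definition of the universal multiplicative character), every class in $K_m(\C M^{m-1})$ is $\theta(y)$ for some $y$, and by definition $\C M_{\C T}(\theta(y)) = [\C A_{\C T}(y)]$ while $\C M_{m-1}(\theta(y)) = [\C A_{m-1}(y)]$, where $\C A_{\C T}$ and $\C A_{m-1}$ are the additive characters of $\C T$ and of $\C F_{m-1}$ on $K_m^{\T{rel}}(\C M^{m-1})$. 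But $\C A_{\C F} = c_{m-1}\cdot(\tau_{\C F}^{\T{cont}} \circ \T{ch}^{\T{rel}})$ depends on $\C F$ only through the class $\T{Ch}^{\T{cont}}(\C F) \in HC^{m-1}_{\T{cont}}(\C M^{m-1})$ in continuous cyclic cohomology; and since $\C T$ is a continuous $m$-summable perturbation of $\C F_{m-1}$, the continuous invariance result Theorem \ref{cocyequivcontI} gives $\T{Ch}^{\T{cont}}(\C T) = \T{Ch}^{\T{cont}}(\C F_{m-1})$, hence $\C A_{\C T} = \C A_{m-1}$ and thus $\C M_{\C T} = \C M_{m-1}$. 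Assembling: for $\C F_1 \sim_{mc} \C F_2$ we get $\C M_{\C F_1} = \C M_{\C F_2}$, and combining with the additivity, inverse, degeneracy (Lemma \ref{degvan}), and unitary-invariance properties we conclude that $\C M_{\C F}([x])$ depends only on the class $[\C F] \in FK^{m-1}(A)$, so the pairing is well-defined.

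The main obstacle is really all pushed into the earlier results: the construction of $\C T$ with the commuting square (Lemma \ref{msummeff}) and the continuous perturbation invariance of the Chern-Connes character (Theorem \ref{cocyequivcontI}, which rests on Theorem \ref{cocyequivI}). Given those, the argument here is a bookkeeping exercise chaining functoriality of $K_m$, surjectivity of $\theta$, and the fact that the additive character factors through continuous cyclic cohomology; the one subtlety to state carefully is that $\pi_{\C F}$ is only well-defined up to conjugation by a diagonal unitary, but this ambiguity is absorbed by unitary invariance of $\C M_{\C F}$ and does not affect the induced map on $K$-theory.
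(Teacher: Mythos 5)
Your proposal is correct and follows essentially the same route as the paper: reduce to invariance under $m$-summable perturbations via the recorded additivity, inverse, unitary-invariance and degeneracy (Lemma \ref{degvan}) properties, invoke Lemma \ref{msummeff} to produce the continuous perturbation $\C T$ of the universal module with the commuting square, and then pass through the surjectivity of $\theta$ and the perturbation-invariance of the additive character (Theorem \ref{pairaddi}, resting on Theorem \ref{cocyequivcontI}). The only cosmetic difference is that you phrase the conclusion as an equality of homomorphisms $\C M_{\C T} = \C M_{m-1}$ rather than chasing a single class $[x]$ and its relative lift $[y]$ as the paper does; the content, including the implicit use of naturality of $\theta$ and of the relative Chern character under $\pi_{\C T}$, is identical.
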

\begin{proof}
As mentioned earlier, we will only need to prove that the multiplicative
character is invariant under $m$-summable perturbations. Thus, let $\C F_1$
and $\C F_2$ be two $m$-summable Fredholm modules over $A$ which are
$m$-summable perturbations of each other. By Lemma \ref{msummeff} we have a
continuous $m$-summable Fredholm module $\C T$ which is an $m$-summable
perturbation of the universal $m$-summable Fredholm module. Furthermore, the
diagram \eqref{eq:diansum} of algebra homomorphisms is commutative. Now, let
$[x] \in K_m(A)$. By definition of the multiplicative character associated
with $\C F_1$ we have that
\[
\C M_{\C F_1}([x]) 
= \C M_{m-1}(  (\pi_{\C F_1})_*([x]))
= \big[ \C A_{m-1}([y]) \big] \in \cc/(2\pi i)^{\lceil m/2 \rceil} \zz
\]
Here $[y] \in K_m^{\T{rel}}(\C M^{m-1})$ is \emph{any} element such that
\[
\theta([y]) = (\pi_{\C F_1})_*([x]) \in K_m(\C M^{m-1})
\]
By naturality of the homomorphism $\theta : K_m^{\T{rel}}(\C M^{m-1}) \to
K_m(\C M^{m-1})$ and the commutativity of the diagram \eqref{eq:diansum} we
then get that
\[
(\theta \circ (\pi_{\C T})_* )([y]) = (\pi_{\C F_2})_*([x])
\]
We can thus deduce that
\[
\C M_{\C F_2}([x])
= \big[ \C A_{m-1}( (\pi_{\C T})_*([y])) \big] 
= \big[ \C A_{\C T}([y]) \big] \in \cc/(2 \pi i)^{\lceil m/2 \rceil} \zz
\]
But $\C T$ was an $m$-summable perturbation of the universal $m$-summable
Fredholm module. The desired result therefore follows from Theorem
\ref{pairaddi}.
\end{proof}

\end{document}